\providecommand{\U}[1]{\protect\rule{.1in}{.1in}}
\newtheorem{theorem}{Theorem}
\newtheorem{corollary}[theorem]{Corollary}
\newtheorem{lemma}[theorem]{Lemma}
\newtheorem{proposition}[theorem]{Proposition}
\newenvironment{proof}[1][Proof]{\noindent\textit{#1.} }{}
\begin{document}

\title{Affine Hulls and Simplices: a Constructive Analysis}
\author{Douglas S. Bridges}
\maketitle

\begin{abstract}%
\noindent
This paper deals with certain fundamental results about affine hulls and
simplices in a real normed linear space. The framework of the paper is
Bishop's constructive mathematics, which, with its characteristic
interpretation of\ \emph{existence }as \emph{constructibility}, often involves
more subtle estimation than its classical-logic-based counterpart. As well as
technically more involved proofs (for example, that of Theorem \ref{sept13t1}
on the perturbation of vertices), we have included a number of elementary ones
for completeness of exposition.

\end{abstract}

%

\setcounter{secnumdepth}{0}%
\normalfont\sf

\section{Introduction}

In this article, which was originally written to facilitate a proof in
\cite{DSBmcom}, we present a constructive\footnote{%
\normalfont\sf
The constructive mathematical framework in which our work is developed is
Bishop's constructive mathematics, \textbf{BISH}
\cite{Bishop,BB,BVtech,Handbook}. In practice, this is mathematics using
intuitionistic logic and an appropriate set- or type-theory, such as are found
in \cite[Chs. 1,2]{Handbook} and \cite{Balps,BMorse,DP}.} development of
fundamentals of the theory of simplices in a real\footnote{%
\normalfont\sf
When we refer to a \emph{real linear/normed space} $X$, we mean that $X$ is a
linear/normed linear space over $\mathbf{R}$.} normed linear space $X$. In
order to do so, we have included a certain amount of basic material that
should be known to the reader; but even for some of that material the proofs,
being fully constructive, may not be entirely standard.

For background material on constructive analysis, we refer the reader to the
bibliography, and in particular to Chapters 8 and 9 of \emph{The Handbook of
Constructive Mathematics \ }\cite{Handbook}. However, we note here one feature
of BISH: if $x,y$ are elements of a normed linear space $X$, then `$x\neq y$'
means `$\left\Vert x-y\right\Vert >0$', which is a constructively stronger
statement than `$\lnot(x=y)$'. In keeping with this, finitely many vectors
$v_{1},\ldots,v_{n}$ in $X$ are said to be \emph{linearly independent} if
$\sum\limits_{i=1}^{n}\lambda_{i}v_{i}\neq0$ for all scalars $\lambda
_{1},\ldots,\lambda_{n}$ such that $\sum\limits_{i=1}^{n}\left\vert
\lambda_{i}\right\vert >0.$ This condition is constructively stronger than,
and implies, the standard classically equivalent defining condition for linear
independence: namely, that if $\lambda_{i}$ $(1\leq i\leq n)\ $are scalars
such that $\sum\limits_{i=1}^{n}\lambda_{i}v_{i}=0,$ then $\lambda_{i}=0$ for
each $i$.%

\medskip
\noindent
\textbf{Notation: }By the \emph{standard basis of} $\mathbf{R}^{n}$ we mean
$\left\{  e_{1},\ldots,e_{n}\right\}  $, where the vector $e_{k}$ has
$k^{\text{\emph{th}}}$ component $1$ and all other components $0$. For
$n\geq2$ we use boldface letters $\mathbf{x},\mathbf{\lambda},\ldots$ to
denote elements of $\mathbf{R}^{n}$ with $k^{th}$ component $x_{k},\lambda
_{k},\ldots$ relative to the standard basis of $\mathbf{R}^{n}$. We define
these two norms on $\mathbf{R}^{n}:$%
\[
\left\Vert \mathbf{x}\right\Vert _{1}\equiv\sum_{k=1}^{n}\left\vert
x_{k}\right\vert \text{ \ and \ }\left\Vert \mathbf{x}\right\Vert _{2}%
\equiv\sqrt{\sum_{k=1}^{n}x_{k}^{2}}\text{.}%
\]%
\medskip

\section{Affine hulls and simplices}

Let $a_{1},\ldots,a_{k}$ be points of a linear space $X$ over $\mathbf{R}$,
and $\lambda_{1},\ldots\lambda_{k}$ real numbers such that $\sum_{i=1}%
^{k}\lambda_{k}=1$. Then $\sum_{i=1}^{k}\lambda_{k}a_{k}$ is an \emph{affine
combination} of the points $a_{k}$. If, in addition, $\lambda_{i}\geq0$ for
each $i$, then $\sum_{i=1}^{k}\lambda_{k}a_{k}$ is a \emph{convex combination
}of those points. If $S\subset X,$ then the set $\mathsf{aff}\ S$of all affine
combinations of elements of $S$ is called the \emph{affine hull }of $S$, and
the set $\mathsf{co}(S)$ of all convex combinations of elements of $S$ is
called the \emph{convex hull} of $S$. For simplicity we often drop parentheses
and write $\mathsf{aff\,}S$ and $\mathsf{co\,}S$

\begin{proposition}
\label{2507pr1}An affine combination of affine combinations in a real linear
space $X$ is an affine combination.
\end{proposition}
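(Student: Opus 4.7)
The plan is to unfold the definitions and reduce everything to a single double sum, then check that the total weight is $1$.

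First I would fix notation. Suppose we are given affine combinations
$b_{j}=\sum_{i=1}^{k_{j}}\lambda_{ji}a_{ji}$ for $j=1,\ldots,m$, where each inner coefficient family satisfies $\sum_{i=1}^{k_{j}}\lambda_{ji}=1$, and a further tuple of real scalars $\mu_{1},\ldots,\mu_{m}$ with $\sum_{j=1}^{m}\mu_{j}=1$. I want to show that $\sum_{j=1}^{m}\mu_{j}b_{j}$ is itself an affine combination of the underlying points $a_{ji}$.

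Next I would substitute and interchange the two (finite) sums to write
\[
\sum_{j=1}^{m}\mu_{j}b_{j}=\sum_{j=1}^{m}\sum_{i=1}^{k_{j}}(\mu_{j}\lambda_{ji})\,a_{ji}.
\]
This is a linear combination of the $a_{ji}$ with coefficients $c_{ji}\equiv\mu_{j}\lambda_{ji}$. The verification that it is affine is then a one-line computation:
\[
\sum_{j=1}^{m}\sum_{i=1}^{k_{j}}\mu_{j}\lambda_{ji}=\sum_{j=1}^{m}\mu_{j}\Bigl(\sum_{i=1}^{k_{j}}\lambda_{ji}\Bigr)=\sum_{j=1}^{m}\mu_{j}\cdot1=1.
\]

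There is no real obstacle here: no existential witnesses need to be produced beyond the coefficients $c_{ji}$ already written down, and the constructive subtleties highlighted in the introduction (strict positivity, linear independence phrased via $\neq$) play no role in this statement, since affine combinations are defined purely by the equality $\sum\lambda_{i}=1$. The only thing to be a little careful about is that the indexing sets $\{1,\ldots,k_{j}\}$ may have different sizes for different $j$; this is handled automatically by the nested finite sum, without any need to pad with zero coefficients. Hence the conclusion follows directly.
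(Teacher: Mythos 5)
Your proof is correct. The core computation is the same one the paper uses --- substitute the inner combinations, interchange the two finite sums, and check that the products $\mu_j\lambda_{ji}$ sum to $1$ --- but you carry it out directly for an arbitrary $m$-term outer combination, whereas the paper proves only the two-term case $\alpha x+(1-\alpha)y$ and then appeals to ``iterating this argument a finite number of times.'' Your route is actually the safer one here: reducing an $m$-term affine combination to a nest of two-term ones requires writing $\sum_{j=1}^{m}\mu_j b_j$ as something like $\mu_1 b_1+(1-\mu_1)\sum_{j\geq 2}\tfrac{\mu_j}{1-\mu_1}b_j$, which needs the partial sums of the $\mu_j$ to be invertible --- they need not be, and constructively one cannot even decide whether $1-\mu_1=0$. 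The single double sum avoids any such case distinction, and your remark that the varying inner index sets $\{1,\ldots,k_j\}$ cause no trouble is accurate. So the proposal is complete as written, and if anything more self-contained than the paper's version.
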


\begin{proof}
Consider first an affine combination of two affine combinations.. Let
$x=\sum_{i=1}^{m}\xi_{i}b_{i}$ and $y=\sum_{j=1}^{k}\eta_{j}c_{j}$, where each
$b_{i}$ and each $c_{j}$ belongs to $X$, each $\xi_{i}$ and each $\eta_{j}$
belongs to $\mathbf{R}$, and $\sum_{i=1}^{m}\xi_{i}=1=$ $\sum_{j=1}^{k}%
\eta_{j}$. For each $\alpha\in\mathbf{R},$%
\[
\alpha x+\left(  1-\alpha\right)  y=\sum_{i=1}^{m}\alpha\xi_{i}b_{i}%
+\sum_{j=1}^{k}\left(  1-\alpha\right)  \eta_{j}c_{j}%
\]
where%
\[
\sum_{i=1}^{m}\alpha\xi_{i}+\sum_{j=1}^{k}\left(  1-\alpha\right)  \eta
_{j}=\alpha\sum_{i=1}^{m}\xi_{i}+\left(  1-\alpha\right)  \sum_{j=1}^{k}%
\eta_{j}=1.
\]
Thus $\alpha x+\left(  1-\alpha\right)  y$ is an affine combination of the
points $b_{i}$ and $c_{j}$ $(1\leq i\leq m$, $1\leq j\leq k)$ in $X$.
Iterating this argument, a finite number of times completes the proof.%

\hfill
$\square$
\end{proof}

\begin{proposition}
\label{jul22p1}If $F$ is an inhabited, finitely enumerable subset of a real
linear space $X$, then $\mathsf{aff\ }F=\mathsf{aff(co}\ F)$.
\end{proposition}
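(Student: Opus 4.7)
The plan is to establish the two inclusions separately and to rely on Proposition \ref{2507pr1} for the nontrivial one; no special constructive care is needed here, since the hypotheses only concern the existence of explicit affine/convex representations, which we propagate directly.

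For the inclusion $\mathsf{aff}\ F \subseteq \mathsf{aff}(\mathsf{co}\ F)$, I would note that each $a \in F$ is the trivial convex combination $1 \cdot a$ of itself, so $F \subseteq \mathsf{co}\ F$. Any affine combination of points of $F$ is therefore an affine combination of points of $\mathsf{co}\ F$, giving the inclusion.

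For the reverse inclusion $\mathsf{aff}(\mathsf{co}\ F) \subseteq \mathsf{aff}\ F$, I would take an arbitrary $y \in \mathsf{aff}(\mathsf{co}\ F)$ and write it in the form $y = \sum_{j=1}^{k} \eta_j c_j$ with $\sum_{j=1}^{k} \eta_j = 1$ and each $c_j \in \mathsf{co}\ F$. Since each $c_j$ is a convex combination of elements of $F$, and since a convex combination is in particular an affine combination (the coefficients sum to $1$), $y$ is exhibited as an affine combination of affine combinations of elements of $F$. Proposition \ref{2507pr1} then lets me rewrite $y$ as a single affine combination of points of $F$, so $y \in \mathsf{aff}\ F$.

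There is no real obstacle: the only substantive step is the appeal to Proposition \ref{2507pr1}, which was proved precisely to enable such reassembly. Note that the finite-enumerability hypothesis on $F$ plays no actual role in this argument, since both sides of the proposed equality are defined via finite combinations of members of $F$; it is presumably stated here because $F$ is finitely enumerable in the applications that follow.
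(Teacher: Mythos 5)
Your proof is correct and follows essentially the same route as the paper: both inclusions are handled the same way, with the nontrivial one obtained by expanding each point of $\mathsf{co}\,F$ as a convex (hence affine) combination of the $a_k$ and recombining. The only cosmetic difference is that the paper carries out the recombination explicitly as a double sum $x=\sum_{k}\bigl(\sum_{j}\xi_{j}\beta_{j,k}\bigr)a_{k}$ with coefficients summing to $1$, whereas you delegate that step to Proposition \ref{2507pr1}; and your closing observation that the finite enumerability of $F$ is not actually needed for the argument is accurate.
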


\begin{proof}
Let $F=\left\{  a_{1},\ldots,a_{n}\right\}  $ and $x\in\mathsf{aff(co}\ F)$.
Then there exist $b_{j},\xi_{j}\ (1\leq j\leq m)\ $such that $b_{j}%
\in\mathsf{co\ }F,\sum_{j=1}^{m}\xi_{j}=1$, and $x=\sum_{j=1}^{m}\xi_{j}b_{j}%
$. Choose $\beta_{j,k}\geq0$ $\left(  1\leq k\leq n\right)  $\ such that
$\sum_{k=1}^{n}\beta_{j,k}=1$ and $b_{j}=\sum_{k=1}^{n}\beta_{j,k}a_{k}$. Then%
\[
x=\sum_{j=1}^{m}\xi_{j}\sum_{k=1}^{n}\beta_{j,k}a_{k}=\sum_{k=1}^{n}\sum
_{j=1}^{m}\xi_{j}\beta_{j,k}a_{k},
\]
where%
\[
\sum_{k=1}^{n}\sum_{j=1}^{m}\xi_{j}\beta_{j,k}=\sum_{j=1}^{m}\xi_{j}\sum
_{k=1}^{n}\beta_{j,k}=\sum_{j=1}^{m}\xi_{j}=1.
\]
Hence $x\in\mathsf{aff}\left\{  a_{1},\ldots.a_{n+1}\right\}  $. Thus
$\mathsf{aff}(\Sigma)\subset\mathsf{aff}\left\{  a_{1},\ldots,a_{n+1}\right\}
$. The reverse inclusion is trivial.%
\hfill
$\square$
\end{proof}

We say that elements $a_{1},\ldots,a_{n+1}$ of a normed space $X$ are
\emph{affinely independent} if the vectors $a_{k}-a_{n+1}$ $\left(  1\leq
k\leq n\right)  $ are linearly independent. In that case, for any $j$ with
$1\leq j\leq n+1$ the vectors $a_{k}-a_{j}\ \left(  1\leq k\leq n+1,k\neq
j\right)  $ are linearly independent.

\begin{lemma}
\label{sept03L1}Let $a_{1},\ldots,a_{n+1}$ be elements of a normed space $X$,
and let $\mathbf{\lambda},\mathbf{\mu\in}\mathbf{R}^{n+1}$ be such that $%
{\textstyle\sum_{k=1}^{n+1}}
\lambda_{k}=1=%
{\textstyle\sum_{k=1}^{n+1}}
\mu_{k}$. Then%
\[%
{\textstyle\sum_{k=1}^{n+1}}
\lambda_{k}a_{k}-%
{\textstyle\sum_{k=1}^{n+1}}
\mu_{k}a_{k}=%
{\textstyle\sum_{k=1}^{n}}
\left(  \lambda_{k}-\mu_{k}\right)  (a_{k}-a_{n+1}).
\]

\end{lemma}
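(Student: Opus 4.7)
The plan is to introduce the difference coefficients $\nu_k := \lambda_k - \mu_k$ and exploit the fact that, because both $\lambda$ and $\mu$ sum to $1$, the vector $\nu$ sums to $0$. This single observation collapses the identity to a one-line manipulation, so the main work is merely organising the bookkeeping.

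First I would rewrite the left-hand side as $\sum_{k=1}^{n+1}\nu_k a_k$ by linearity. Next, from $\sum_{k=1}^{n+1}\lambda_k = 1 = \sum_{k=1}^{n+1}\mu_k$ I would conclude $\sum_{k=1}^{n+1}\nu_k = 0$, hence $\nu_{n+1} = -\sum_{k=1}^{n}\nu_k$. Substituting this into the isolated last term yields
\[
\sum_{k=1}^{n+1}\nu_k a_k \;=\; \sum_{k=1}^{n}\nu_k a_k \;+\; \nu_{n+1}a_{n+1} \;=\; \sum_{k=1}^{n}\nu_k a_k \;-\; \Bigl(\sum_{k=1}^{n}\nu_k\Bigr)a_{n+1},
\]
which regroups to $\sum_{k=1}^{n}\nu_k(a_k - a_{n+1})$, as required.

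There is no real obstacle here: the result is a purely algebraic identity in the linear space $X$, not requiring any constructive estimation or order information about the $\lambda_k$ or $\mu_k$. The only thing to be careful about is keeping the index that is being singled out (here $n+1$) consistent throughout, since the choice of which index plays that role is arbitrary and the lemma is stated for this particular one.
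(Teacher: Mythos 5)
Your proposal is correct and is essentially the same computation as the paper's: the paper substitutes $\lambda_{n+1}=1-\sum_{k=1}^{n}\lambda_{k}$ and $\mu_{n+1}=1-\sum_{k=1}^{n}\mu_{k}$ directly, which is just your observation that the difference coefficients sum to zero, packaged without the intermediate notation $\nu_k$. No further comment is needed.
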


\begin{proof}
We have%
\begin{align}%
{\textstyle\sum_{k=1}^{n+1}}
\lambda_{k}a_{k}-%
{\textstyle\sum_{k=1}^{n+1}}
\mu_{k}a_{k}  &  =%
{\textstyle\sum_{k=1}^{n}}
\lambda_{k}a_{k}+\left(  1-%
{\textstyle\sum_{k=1}^{n}}
\lambda_{k}\right)  a_{n+1}\nonumber\\
&  -%
{\textstyle\sum_{k=1}^{n}}
\mu_{k}a_{k}-\left(  1-%
{\textstyle\sum_{k=1}^{n}}
\mu_{k}\right)  a_{n+1}\nonumber\\
&  =%
{\textstyle\sum_{k=1}^{n}}
\left(  \lambda_{k}-\mu_{k}\right)  (a_{k}-a_{n+1}). \tag*{$\square$}%
\end{align}

\end{proof}

\begin{proposition}
\label{jul23p1}Let $X$ be a normed space, and $a_{1},\ldots,a_{n+1}$ affinely
independent elements of $X$. Let $\mathbf{\lambda},\mathbf{\mu\in}%
\mathbf{R}^{n+1}$ be such that $\mathbf{\lambda}\neq\mathbf{\mu}$ and $%
{\textstyle\sum_{k=1}^{n+1}}
\lambda_{k}=1=%
{\textstyle\sum_{k=1}^{n+1}}
\mu_{k}$. Then $%
{\textstyle\sum_{k=1}^{n+1}}
\lambda_{k}a_{k}\neq%
{\textstyle\sum_{k=1}^{n+1}}
\mu_{k}a_{k}.$
\end{proposition}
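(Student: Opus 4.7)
The plan is to combine Lemma \ref{sept03L1} with the constructive definition of linear independence applied to the vectors $a_k - a_{n+1}$ $(1 \leq k \leq n)$. Lemma \ref{sept03L1} rewrites
\[
\sum_{k=1}^{n+1}\lambda_k a_k - \sum_{k=1}^{n+1}\mu_k a_k = \sum_{k=1}^{n}(\lambda_k-\mu_k)(a_k-a_{n+1}),
\]
so to apply the hypothesis of affine independence it will suffice to verify that $\sum_{k=1}^{n}|\lambda_k-\mu_k|>0$.

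The hypothesis $\mathbf{\lambda}\neq\mathbf{\mu}$ unfolds constructively to $\sum_{k=1}^{n+1}|\lambda_k-\mu_k|>0$, so the real work is absorbing the $(n+1)$-st term into the first $n$. For this I would invoke the affine constraint $\sum_{k=1}^{n+1}(\lambda_k-\mu_k)=0$, which forces
\[
|\lambda_{n+1}-\mu_{n+1}| = \left| \sum_{k=1}^{n}(\lambda_k-\mu_k)\right| \leq \sum_{k=1}^{n}|\lambda_k-\mu_k|,
\]
and hence $\sum_{k=1}^{n+1}|\lambda_k-\mu_k| \leq 2\sum_{k=1}^{n}|\lambda_k-\mu_k|$. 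The strict positivity of the left-hand side therefore transfers to the right-hand side. Feeding this into the constructive definition of linear independence of $a_1-a_{n+1},\ldots,a_n-a_{n+1}$ gives $\sum_{k=1}^{n}(\lambda_k-\mu_k)(a_k-a_{n+1})\neq 0$, and then Lemma \ref{sept03L1} finishes the proof.

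The main obstacle, modest though it is, is exactly this BISH-flavoured step of keeping positivity explicit. A classical argument could simply contrapose---``if the difference vanished, then each $\lambda_k-\mu_k$ would vanish''---but here we must produce a quantitative positive lower bound on $\sum_{k=1}^{n}|\lambda_k-\mu_k|$ in order to feed the strong form of linear independence. The factor-of-two doubling estimate coming from the affine constraint is what makes this conversion go through.
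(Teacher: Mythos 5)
Your proof is correct and follows essentially the same route as the paper: Lemma \ref{sept03L1} plus the observation that the affine constraint forces $\left\vert \lambda_{n+1}-\mu_{n+1}\right\vert \leq\sum_{k=1}^{n}\left\vert \lambda_{k}-\mu_{k}\right\vert$, so positivity of $\sum_{k=1}^{n+1}\left\vert \lambda_{k}-\mu_{k}\right\vert$ transfers to the first $n$ terms, which is exactly what the strong form of linear independence needs. The only (cosmetic) difference is that the paper phrases this as a constructive case split --- either some $\left\vert \lambda_{k}-\mu_{k}\right\vert >0$ with $k\leq n$, or $\lambda_{n+1}\neq\mu_{n+1}$ --- whereas your uniform factor-of-two estimate avoids the disjunction altogether.
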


\begin{proof}
Since $\mathbf{\lambda}\neq\mathbf{\mu}$, either $\left\vert \lambda_{k}%
-\mu_{k}\right\vert >0$ for some $k\leq n$, in which case, by Lemma
\ref{sept03L1} and linear independence,%
\[
\left\Vert
{\textstyle\sum_{k=1}^{n+1}}
\lambda_{k}a_{k}-%
{\textstyle\sum_{k=1}^{n+1}}
\mu_{k}a_{k}\right\Vert =\left\Vert
{\textstyle\sum_{k=1}^{n}}
\left(  \lambda_{k}-\mu_{k}\right)  (a_{k}-a_{n+1})\right\Vert >0\text{;}%
\]
or else $\lambda_{n+1}\neq\mu_{n+1}$. In the latter case%
\[%
{\textstyle\sum_{k=1}^{n}}
\left\vert \lambda_{k}-\mu_{k}\right\vert \geq\left\vert
{\textstyle\sum_{k=1}^{n}}
\left(  \lambda_{k}-\mu_{k}\right)  \right\vert =\left\vert (1-\lambda
_{n+1})-(1-\mu_{n+1})\right\vert =\left\vert \lambda_{n+1}-\mu_{n+1}%
\right\vert >0
\]
and we are back in the first case.%
\hfill
$\square$
\end{proof}

\begin{corollary}
\label{jul24c1A}Under the hypotheses of \emph{Proposition \ref{jul23p1}}, for
each $x\in\mathsf{aff}\left\{  a_{1},\ldots,a_{n+1}\right\}  $ there exists a
\emph{unique} $\mathbf{\lambda}\in\mathbf{R}^{n+1}$ such that $%
{\textstyle\sum_{k=1}^{n+1}}
\lambda_{k}=1$ and $x=%
{\textstyle\sum_{k=1}^{n+1}}
\lambda_{k}a_{k}$.
\end{corollary}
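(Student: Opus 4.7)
My plan is to split the corollary into an existence claim and a uniqueness claim, and to reduce uniqueness directly to Proposition \ref{jul23p1}; the genuine work has already been done there.

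For existence, I will unfold the definition of affine hull: $x \in \mathsf{aff}\{a_{1},\ldots,a_{n+1}\}$ means that $x = \sum_{i=1}^{m}\xi_{i}b_{i}$ for some $m$, some scalars $\xi_{i}$ with $\sum_{i}\xi_{i}=1$, and some choices $b_{i} \in \{a_{1},\ldots,a_{n+1}\}$. To put this in the required form indexed by $\{1,\ldots,n+1\}$, I pad and merge: for each $k \in \{1,\ldots,n+1\}$ I set $\lambda_{k}$ to be the sum of those $\xi_{i}$ for which $b_{i}=a_{k}$, taking $\lambda_{k}=0$ when no such $i$ exists. Then $\sum_{k=1}^{n+1}\lambda_{k}=\sum_{i=1}^{m}\xi_{i}=1$ and $\sum_{k=1}^{n+1}\lambda_{k}a_{k}=x$, as required.

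For uniqueness, suppose $\sum_{k=1}^{n+1}\lambda_{k}a_{k} = x = \sum_{k=1}^{n+1}\mu_{k}a_{k}$ with both coefficient vectors summing to $1$. I want to conclude $\mathbf{\lambda}=\mathbf{\mu}$. The natural BISH route is to rule out the apartness $\mathbf{\lambda}\neq\mathbf{\mu}$: were this to hold, Proposition \ref{jul23p1} would yield $\sum_{k=1}^{n+1}\lambda_{k}a_{k}\neq\sum_{k=1}^{n+1}\mu_{k}a_{k}$, contradicting the hypothesis that both sides equal $x$. Hence $\neg(\mathbf{\lambda}\neq\mathbf{\mu})$, whence $\mathbf{\lambda}=\mathbf{\mu}$ by tightness of apartness on $\mathbf{R}^{n+1}$.

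I do not anticipate a real obstacle, since the constructive content of the statement is packaged inside Proposition \ref{jul23p1}. The only step that deserves explicit flagging is the final passage from $\neg(\mathbf{\lambda}\neq\mathbf{\mu})$ to equality; this is standard in BISH because $\|\mathbf{\lambda}-\mathbf{\mu}\|$ is a nonnegative real, so $\neg(\|\mathbf{\lambda}-\mathbf{\mu}\|>0)$ forces $\|\mathbf{\lambda}-\mathbf{\mu}\|=0$, but it is the point at which the constructive proof differs cosmetically from the classical one and so is worth naming in the write-up.
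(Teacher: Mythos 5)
Your proof is correct and follows exactly the route the paper intends: the paper states this corollary without proof as an immediate consequence of Proposition \ref{jul23p1}, existence being just the definition of the affine hull (with coefficients grouped by vertex index) and uniqueness being the contrapositive of the proposition together with the tightness of the apartness on $\mathbf{R}^{n+1}$. Your explicit flagging of the passage from $\lnot(\mathbf{\lambda}\neq\mathbf{\mu})$ to $\mathbf{\lambda}=\mathbf{\mu}$ is the right constructive point to make.
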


\begin{corollary}
\label{jul29c1}Let $X$ be a real normed space, and $a_{1},\ldots,a_{n+1}$
affinely independent elements of $X$. Let $\mathbf{\lambda}\in\mathbf{R}%
^{n+1}$, $%
{\textstyle\sum_{k=1}^{n+1}}
\lambda_{k}=1$, and $x=\sum_{k=1}^{n+1}\lambda_{k}a_{k}$. If $1\leq k\leq n+1$
and $\lambda_{k}\neq1$, then $x\neq a_{k}$.
\end{corollary}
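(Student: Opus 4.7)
The plan is to reduce this immediately to Proposition \ref{jul23p1} by exhibiting $a_k$ itself as an affine combination with a convenient coefficient vector. Specifically, I would define $\mathbf{\mu}\in\mathbf{R}^{n+1}$ by $\mu_k=1$ and $\mu_j=0$ for $j\neq k$. Then trivially $\sum_{j=1}^{n+1}\mu_j=1$ and $\sum_{j=1}^{n+1}\mu_j a_j=a_k$, so $a_k$ is represented as an affine combination of $a_1,\ldots,a_{n+1}$ with coefficient vector $\mathbf{\mu}$.

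The only thing to check is that $\mathbf{\lambda}\neq\mathbf{\mu}$ in the constructive sense, i.e.\ that $\|\mathbf{\lambda}-\mathbf{\mu}\|>0$ (equivalently, that some component of $\mathbf{\lambda}-\mathbf{\mu}$ has positive absolute value). But this is exactly the hypothesis $\lambda_k\neq 1$, since $\mu_k=1$ gives $|\lambda_k-\mu_k|=|\lambda_k-1|>0$. Proposition \ref{jul23p1} then applies directly and yields $\bigl\|\sum_{j=1}^{n+1}\lambda_j a_j-\sum_{j=1}^{n+1}\mu_j a_j\bigr\|>0$, that is, $\|x-a_k\|>0$, which is the constructive meaning of $x\neq a_k$.

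I do not expect any genuine obstacle here; the corollary is essentially a one-line specialisation of the preceding proposition, and the only place where constructive care is required is in the step where the inequality $\lambda_k\neq 1$ (interpreted as $|\lambda_k-1|>0$) is used to witness $\mathbf{\lambda}\neq\mathbf{\mu}$ in the normed-space sense required by Proposition \ref{jul23p1}. Once that witness is in hand, the conclusion is immediate.
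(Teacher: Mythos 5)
Your proof is correct and is precisely the intended specialisation of Proposition \ref{jul23p1} (the paper omits the proof of this corollary as immediate): taking $\mathbf{\mu}=e_{k}$ and using $\lambda_{k}\neq 1$ as the witness that $\mathbf{\lambda}\neq\mathbf{\mu}$ is exactly the right constructive step. Nothing further is needed.
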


\begin{lemma}
\label{aug04l1A}Let $a_{1},\ldots,a_{n+1}$ be affinely independent elements of
a real normed linear space $X$. Then for each $\varepsilon>0$ there exists
$\delta>0$ with the following properties.

\begin{enumerate}
\item[\emph{(i)}] If%
\begin{equation}
\ \mathbf{\xi},\mathbf{\eta}\in\mathbf{R}^{n+1},\ \sum_{k=1}^{n+1}\xi
_{k}=1=\sum_{k=1}^{n+1}\eta_{k},\ x=\sum_{k=1}^{n+1}\xi_{k}a_{k}%
,\ y=\sum_{k=1}^{n+1}\eta_{k}a_{k}, \label{SS1}%
\end{equation}
$\ $and$\ \left\Vert x-y\right\Vert <\delta,$ then $\sum_{k=1}^{n+1}\left\vert
\xi_{k}-\eta_{k}\right\vert <\varepsilon$.

\item[\emph{(ii)}] If \emph{(\ref{SS1})} holds and $\sum_{k=1}^{n}\left\vert
\xi_{k}-\eta_{k}\right\vert <\delta$, then $\left\Vert x-y\right\Vert
<\varepsilon$.
\end{enumerate}
\end{lemma}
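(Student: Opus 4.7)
The plan is to reduce both parts to estimates on the linear map $T\colon\mathbf{R}^{n}\to X$ defined by $T(\mathbf{\mu})=\sum_{k=1}^{n}\mu_{k}(a_{k}-a_{n+1})$. By Lemma~\ref{sept03L1}, whenever $\mathbf{\xi},\mathbf{\eta}$ satisfy condition~(\ref{SS1}) we have $x-y=T(\mathbf{\mu})$ with $\mu_{k}=\xi_{k}-\eta_{k}$ for $1\leq k\leq n$. Moreover the constraint $\sum\xi_{k}=1=\sum\eta_{k}$ forces $\xi_{n+1}-\eta_{n+1}=-\sum_{k=1}^{n}(\xi_{k}-\eta_{k})$, so $|\xi_{n+1}-\eta_{n+1}|\leq\sum_{k=1}^{n}|\xi_{k}-\eta_{k}|$; hence control of the first $n$ coefficient differences already controls the full sum $\sum_{k=1}^{n+1}|\xi_{k}-\eta_{k}|$ up to a factor of two. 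This reduces both (i) and (ii) to two-sided norm estimates on $T$.

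For part~(ii) the triangle inequality gives $\Vert x-y\Vert=\Vert T(\mathbf{\mu})\Vert\leq M\sum_{k=1}^{n}|\mu_{k}|$, where $M=1+\max_{1\leq k\leq n}\Vert a_{k}-a_{n+1}\Vert$. Taking $\delta=\varepsilon/M$ therefore settles this direction.

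For part~(i) I would invoke the standard constructive counterpart of the equivalence of norms in finite dimensions: since $a_{1}-a_{n+1},\ldots,a_{n}-a_{n+1}$ are linearly independent, there exists $c>0$ with $\Vert T(\mathbf{\mu})\Vert\geq c\sum_{k=1}^{n}|\mu_{k}|$ for every $\mathbf{\mu}\in\mathbf{R}^{n}$. Applied with $\mu_{k}=\xi_{k}-\eta_{k}$ this yields $\sum_{k=1}^{n}|\xi_{k}-\eta_{k}|\leq\Vert x-y\Vert/c$, and combining with the paragraph above gives $\sum_{k=1}^{n+1}|\xi_{k}-\eta_{k}|\leq 2\Vert x-y\Vert/c$; so $\delta=c\varepsilon/2$ works. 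In fact one could take a single $\delta=\min(\varepsilon/M,\,c\varepsilon/2)$ to handle both clauses simultaneously.

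The substantive difficulty is producing the constant $c$. Classically it is immediate from compactness of the unit $\ell^{1}$-sphere in $\mathbf{R}^{n}$ together with strict positivity of the norm off the origin; constructively one cannot deduce a positive infimum from mere pointwise positivity of a uniformly continuous function on a totally bounded set, so some care is needed. The standard route is an induction on $n$ that uses the locatedness of finite-dimensional subspaces of $X$ together with the strong constructive definition of linear independence recorded in the introduction. I expect this uniform lower bound to be cited from the BISH references in the bibliography, or set up as an auxiliary lemma; once it is in hand, both clauses of Lemma~\ref{aug04l1A} follow by the routine calculations sketched above.
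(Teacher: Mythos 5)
Your proposal is correct and follows essentially the same route as the paper: both reduce the statement via Lemma \ref{sept03L1} to two-sided estimates on the map $\mathbf{\mu}\rightsquigarrow\sum_{k=1}^{n}\mu_{k}(a_{k}-a_{n+1})$, handle the $(n+1)^{\text{st}}$ coordinate by the observation $\left\vert \xi_{n+1}-\eta_{n+1}\right\vert \leq\sum_{k=1}^{n}\left\vert \xi_{k}-\eta_{k}\right\vert$, and obtain the constructively nontrivial lower bound (your constant $c$, i.e.\ the boundedness of the inverse map) by citation --- the paper takes it from \cite[4.1.8]{BVtech}, exactly the kind of reference you anticipated.
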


\begin{proof}
Since the vectors in $X$ are linearly independent, it follows from
\cite[4.1.8]{BVtech} that
\[
u:\mathbf{x}\rightsquigarrow\sum_{k=1}^{n}x_{k}\left(  a_{k}-a_{n+1}\right)
\]
is a bounded linear injection of $\mathbf{R}^{n}$ onto the linear space $V$
generated by the linearly independent vectors $a_{k}-a_{n+1}$ $(1\leq k\leq
n)$. Thus for each $\varepsilon>0$ there exists $\delta>0$ such that if
$\mathbf{\zeta}\in\mathbf{R}^{n}$, and $\left\Vert \sum_{k=1}^{n}\zeta
_{k}\left(  a_{k}-a_{n+1}\right)  \right\Vert <\delta$, then $\sum_{k=1}%
^{n}\left\vert \zeta_{k}\right\vert <\varepsilon/2$. Suppose that (\ref{SS1})
holds. By Lemma \ref{sept03L1},%
\[
x-y=\sum_{k=1}^{n}\left(  \xi_{k}-\lambda_{k}\right)  (a_{k}-a_{n+1}),
\]
so if $\left\Vert x-y\right\Vert <\delta$, then $\sum_{k=1}^{n}\left\vert
\xi_{k}-\eta_{k}\right\vert <\varepsilon/2$; moreover, in that case,%
\begin{align*}
\left\vert \xi_{n+1}-\eta_{n+1}\right\vert  &  =\left\vert 1-\sum_{k=1}^{n}%
\xi_{k}-\left(  1-\sum_{k=1}^{n}\eta_{k}\right)  \right\vert \\
&  =\left\vert \sum_{k=1}^{n}\left(  \xi_{k}-\eta_{k}\right)  \right\vert
\leq\sum_{k=1}^{n}\left\vert \xi_{k}-\eta_{k}\right\vert <\varepsilon/2.
\end{align*}
Hence $\sum_{k=1}^{n+1}\left\vert \xi_{k}-\eta_{k}\right\vert <\varepsilon$.
This completes the proof of (i). Since, by \cite[4.1.8]{BVtech}, the inverse
of $u$ is a bounded linear mapping on $V$, we may assume that if
$\mathbf{\zeta}\in\mathbf{R}^{n}$ and $\sum_{k=1}^{n}\left\vert \zeta
_{k}\right\vert <\delta$, then$\ \left\Vert \sum_{k=1}^{n}\zeta_{k}\left(
a_{k}-a_{n+1}\right)  \right\Vert <\varepsilon$. If (\ref{SS1}) holds and
$\sum_{k=1}^{n}\left\vert \xi_{k}-\lambda_{k}\right\vert <\delta$, then, again
using Lemma \ref{sept03L1}, we have
\[
\left\Vert x-y\right\Vert =\left\Vert \sum_{k=1}^{n}\left(  \xi_{k}%
-\lambda_{k}\right)  (a_{k}-a_{n+1})\right\Vert <\varepsilon\text{.}%
\]
This completes the proof of (ii).%
\hfill
$\square$
\end{proof}

\begin{corollary}
\label{sept18c1}Let $a_{1},\ldots,a_{n+1}$ be affinely independent elements of
a real normed linear space $X$. Let $\mathbf{\lambda}\in\mathbf{\Delta}_{n}$,
$\sum_{k=1}^{n+1}\lambda_{k}=1$, and $v=\sum_{k=1}^{n+1}\lambda_{k}a_{k}$. If
$1\leq k\leq n+1$ and $v\neq a_{k}$, then $\lambda_{k}<1$.
\end{corollary}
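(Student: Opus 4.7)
The hypothesis $v\neq a_{k}$ is the positive statement $\Vert v-a_{k}\Vert >0$, and I want to extract from it the positive statement $\lambda_{k}<1$. Since $\mathbf{\lambda}\in\mathbf{\Delta}_{n}$, every $\lambda_{j}$ is non-negative, and the constraint $\sum_{j}\lambda_{j}=1$ rephrases the target as the strict positivity $\sum_{j\neq k}\lambda_{j}>0$. The strategy is to apply Lemma \ref{aug04l1A}(ii) with $a_{k}$ playing the role of the distinguished vertex $a_{n+1}$, a swap permitted by the symmetry remark preceding Lemma \ref{sept03L1}: affine independence does not privilege any single vertex, so all quantitative estimates of Lemma \ref{aug04l1A} are available with $a_{k}$ as the base point.

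More precisely, let $\mathbf{\eta}\in\mathbf{R}^{n+1}$ be the $k$-th standard basis vector, so that $\sum_{j}\eta_{j}=1$ and $a_{k}=\sum_{j}\eta_{j}a_{j}$; then $\sum_{j\neq k}|\lambda_{j}-\eta_{j}|=\sum_{j\neq k}\lambda_{j}$, because $\lambda_{j}\geq 0$ and $\eta_{j}=0$ for $j\neq k$. Set $\varepsilon=\Vert v-a_{k}\Vert/2>0$. The relabelled Lemma \ref{aug04l1A}(ii) furnishes a $\delta>0$ with the property that $\sum_{j\neq k}|\lambda_{j}-\eta_{j}|<\delta$ forces $\Vert v-a_{k}\Vert<\varepsilon$. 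By the constructive dichotomy applied to the real number $\sum_{j\neq k}\lambda_{j}$ against the cut $0<\delta$, one of two alternatives holds: either $\sum_{j\neq k}\lambda_{j}>0$, in which case $\lambda_{k}=1-\sum_{j\neq k}\lambda_{j}<1$ as required, or $\sum_{j\neq k}\lambda_{j}<\delta$, in which case the lemma delivers $\Vert v-a_{k}\Vert<\Vert v-a_{k}\Vert/2$, an impossibility. The second alternative is thus ruled out, and the first yields the conclusion.

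The main obstacle is purely constructive: from $v\neq a_{k}$ one cannot infer $\lambda_{k}<1$ by negating a supposed equality $\lambda_{k}=1$, since the strict inequality carries positive content in \textbf{BISH}. The quantitative $\delta$ supplied by Lemma \ref{aug04l1A}(ii), coupled with the constructive dichotomy for real numbers, is precisely the device that converts the positive lower bound $\Vert v-a_{k}\Vert>0$ into the strict inequality $\lambda_{k}<1$. The remaining work (the reindexing to place $a_{k}$ in the distinguished slot, and the identity $\sum_{j\neq k}|\lambda_{j}-\eta_{j}|=\sum_{j\neq k}\lambda_{j}$ arising from $\mathbf{\lambda}\in\mathbf{\Delta}_{n}$) is bookkeeping.
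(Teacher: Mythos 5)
Your proof is correct and follows essentially the same route as the paper: both invoke Lemma \ref{aug04l1A}(ii) (after reindexing so that $a_{k}$ sits in the distinguished slot), compare $\mathbf{\lambda}$ in the $\ell_{1}$ sense with the barycentric coordinate vector of $a_{k}$, and use the constructive dichotomy for reals to rule out the alternative that would force $\left\Vert v-a_{k}\right\Vert <\left\Vert v-a_{k}\right\Vert$. The only cosmetic difference is that you run the dichotomy on $\sum_{j\neq k}\lambda_{j}$ against $0<\delta$ with $\varepsilon=\left\Vert v-a_{k}\right\Vert /2$, whereas the paper runs it on $\lambda_{k}$ against $1-\delta/2<1$ with $\varepsilon=\left\Vert v-a_{k}\right\Vert$; these are equivalent.
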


\begin{proof}
By Lemma \ref{aug04l1A}, there exists $\delta$ with $0<\delta<1$ such that if
(\ref{SS1}) holds and $\sum_{j=1}^{n}\left\vert \xi_{j}-\eta_{j}\right\vert
<\delta$, then $\left\Vert x-y\right\Vert <\left\Vert v-a_{k}\right\Vert $.
Either $\lambda_{k}<1$ or else $\lambda_{k}>1-\delta/2$. In the latter case,
$\sum_{j=1,j\neq k}^{n+1}\lambda_{j}=1-\lambda_{k}<\delta/2$, so $\sum
_{j=1}^{n+1}\left\vert \lambda_{k}-\mu_{k}\right\vert <\delta$, where $\mu
_{k}=0$ $\left(  k\neq j\right)  $ and $\mu_{j}=1$. Hence $\left\Vert
v-a_{j}\right\Vert <\left\Vert v-a_{j}\right\Vert $, which is absurd; from
which it follows that $\lambda_{j}<1$.%
\hfill
$\square$
\end{proof}

%

\smallskip
If $a_{1},\ldots,a_{n+1}$ are affinely independent elements of a real normed
space $X$, then%
\[
\mathsf{co}\left\{  a_{1},a_{1},\ldots,a_{n+1}\right\}  \equiv\left\{
{\textstyle\sum_{i=1}^{n+1}}
\lambda_{i}a_{i}:\lambda_{i}\geq0\ (1\leq i\leq n+1)\text{ and }%
{\textstyle\sum_{i=1}^{n+1}}
\lambda_{i}=1\right\}
\]
is called an $n$\emph{-simplex}\textbf{\ }$\Sigma$\textbf{\ }\emph{with
vertices}\textbf{\ }$a_{1},a_{1},\ldots,a_{n+1}$. It follows from the remark
immediately preceding Proposition \ref{jul23p1} that if $\sigma$ is any
permutation of $\left\{  1,2,\ldots,n+1\right\}  $, then
\[
\mathsf{co}\left\{  a_{\sigma(1),}a_{\sigma(2)},\ldots,a_{\sigma
(n+1)}\right\}  =\Sigma\text{.}%
\]
If $m$ is a nonnegative integer $\leq n$, then the convex hull of any $m+1$
distinct vertices of $\Sigma$ is called an $m$\emph{-face} of $\Sigma$. Each
vertex is therefore a $0$-face. We call $1$-faces \emph{edges}, $\left(
n-1\right)  $-faces \emph{facets} or just \emph{faces}, and%
\[
\mathsf{co}\left\{  a_{j}:1\leq j\leq n+1,j\neq k\right\}
\]
the face of $\Sigma$ \emph{opposite }$a_{k}$. The \emph{barycentre of }%
$\Sigma$ is the point $\sum_{k=1}^{n+1}\frac{1}{n+1}a_{k}$. If
$\mathbf{\lambda}\in\mathbf{R}^{n+1}$, $\lambda_{k}\geq0\ \left(  1\leq k\leq
n+1\right)  $, and $\sum_{k=1}^{n+1}\lambda_{k}=1$, then $\mathbf{\lambda}$ is
called the \emph{barycentric coordinate vector}, and $\lambda_{k}$ the
$k^{\text{\emph{th}}}$ \emph{barycentric coordinate}, of $%
{\textstyle\sum_{k=1}^{n+1}}
\lambda_{k}a_{k}$.

An important example of an $n$- simplex is the \emph{standard }$n$%
\emph{-simplex}\textbf{\ }$\Delta_{n}$,\textbf{\ }in $\mathbf{R}^{n+1}$.
\[
\Delta_{n}\equiv\mathsf{co}\{e_{1},\ldots,e_{n+1}\},
\]
in which the face of $\Delta_{n}$ opposite $e_{n+1}$ is $\Delta_{n-1}$. To
provide an example of an $n$-simplex in $\mathbf{R}^{n}$, let $u\equiv
\sum_{k=1}^{n}e_{k}$,%
\begin{equation}
v_{n+1}=\frac{1}{n}\left(  1+\sqrt{n+1)}\right)  u, \label{dd1}%
\end{equation}
and%
\[
\Sigma_{n}\equiv\mathsf{co}(e_{1},\ldots,e_{n},v_{n+1}).
\]
According to Wikipedia at

\begin{quote}
{\small https://en.wikipedia.org/wiki/Simplex\#Cartesian\_coordinates\_}

{\small for\_a\_regular\_n-dimensional\_simplex\_in\_Rn}
\end{quote}

%

\noindent
$\Sigma_{n}$ is an $n$-simplex in $\mathbf{R}^{n}$ that is\emph{\ regular} in
the sense that for each $m\leq n$ its $m$-faces are congruent (in the sense of
Euclidean geometry). In particular, each of its edges has length $\sqrt{2}$.
Also, the face opposite the vertex $v_{n+1}$ is $\Delta_{n-1}$, and the
barycentre of $\Sigma_{n}$ is%
\begin{align}
b_{n}  &  \equiv\frac{1}{n+1}\left(  \sum_{k=1}^{n}e_{k}+v_{n+1}\right)
\nonumber\\
&  =\frac{1}{n+1}v_{n+1}+\left(  1-\frac{1}{n+1}\right)  \frac{1}{n}u,
\label{ee1}%
\end{align}
which lies in the open segment of $\mathbf{R}^{n}$ with endpoints $v_{n+1}$
and $\frac{1}{n}u$, the barycentre of $\Delta_{n-1}$.

\begin{lemma}
\label{jul17l1}For each positive integer $n$, $\Delta_{n}$ is a complete
subset of $\mathbf{R}^{n+1}$.
\end{lemma}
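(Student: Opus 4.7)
The plan is to unwind $\Delta_n$ coordinate-wise. The crucial observation is that, since $\{e_1,\ldots,e_{n+1}\}$ is the standard basis of $\mathbf{R}^{n+1}$, when we write an element of $\Delta_n$ as $\sum_{k=1}^{n+1}\lambda_k e_k$ with $\lambda_k\ge 0$ and $\sum\lambda_k=1$, the barycentric coordinate $\lambda_k$ is literally the $k$-th standard coordinate of the point. So, modulo this identification, $\Delta_n$ is just the subset of $\mathbf{R}^{n+1}$ cut out by the closed conditions $x_k\ge 0$ and $\sum_k x_k=1$.

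With this in mind, I would first note that $\{e_1,\ldots,e_{n+1}\}$ is affinely independent, since the vectors $e_k-e_{n+1}$ ($1\le k\le n$) are linearly independent in $\mathbf{R}^{n+1}$; hence Corollary \ref{jul24c1A} guarantees that each $x\in\Delta_n$ has a uniquely determined coordinate vector $\mathbf{\lambda}\in\mathbf{R}^{n+1}$. I would then take a Cauchy sequence $(x^{(m)})$ in $\Delta_n$ and write $x^{(m)}=\sum_{k=1}^{n+1}\lambda_k^{(m)}e_k$ with $\lambda_k^{(m)}\ge 0$ and $\sum_k\lambda_k^{(m)}=1$. Since $\lambda_k^{(m)}$ is the $k$-th standard coordinate of $x^{(m)}$, the Cauchy property of $(x^{(m)})$ in $\mathbf{R}^{n+1}$ forces each real sequence $(\lambda_k^{(m)})_m$ to be Cauchy in $\mathbf{R}$, hence convergent to some $\lambda_k\in\mathbf{R}$.

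It remains to verify that $x\equiv\sum_{k=1}^{n+1}\lambda_k e_k$ lies in $\Delta_n$ and is the limit of $(x^{(m)})$. For the first, $\lambda_k\ge 0$ because $[0,\infty)$ is closed in $\mathbf{R}$, and $\sum_k\lambda_k=\lim_m\sum_k\lambda_k^{(m)}=1$ by continuity of the finite sum; for the second, coordinate-wise convergence in $\mathbf{R}^{n+1}$ gives $x^{(m)}\to x$ directly (alternatively, one could invoke Lemma \ref{aug04l1A}(ii), but it is not needed here). There is no real obstacle: the proof is essentially the remark that a finite-dimensional simplex is cut out by a finite system of weak inequalities and one equation, all of which survive passage to limits constructively.
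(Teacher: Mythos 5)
Your proof is correct and is essentially the paper's own argument: pass to coordinates, observe that each coordinate sequence of a Cauchy sequence in $\Delta_n$ is Cauchy in $\mathbf{R}$, and check that the limit's coordinates are nonnegative and sum to $1$. The appeal to Corollary \ref{jul24c1A} is harmless but unnecessary, since the barycentric coordinates here are literally the standard coordinates of $\mathbf{R}^{n+1}$, which is the identification both you and the paper exploit.
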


\begin{proof}
Since all norms on $\mathbf{R}^{n+1}$ are equivalent, we lose no generality in
working with $\left\Vert .\right\Vert _{1}$. Let $\left(  \mathbf{\lambda}%
_{k}\right)  _{k\geq1}$ be a Cauchy sequence in $\Delta_{n}$, where
$\mathbf{\lambda}_{k}=\left(  \lambda_{k,1},\ldots,\lambda_{k,n+1}\right)  $,
$\sum_{m=1}^{n+1}\lambda_{k,j}=1,$ and each $\lambda_{k,j}\geq0$. For each
$\varepsilon>0$ there exists $N$ such that%
\[
\sum_{m=1}^{n+1}\left\vert \lambda_{j,m}-\lambda_{k,m}\right\vert =\left\Vert
\mathbf{\lambda}_{j}-\mathbf{\lambda}_{k}\right\Vert _{1}\leq\varepsilon
\ \ \ \ \ \left(  j,k\geq N\right)  .
\]
It follows that for $1\leq m\leq n+1$, $\left(  \lambda_{k,m}\right)
_{k\geq1}$ is a Cauchy sequence in $\mathbf{R}$ and therefore converges to a
limit $\lambda_{m}\in\mathbf{R}$. Clearly, each $\lambda_{m}\geq0$,
$\sum_{m=1}^{n+1}\lambda_{m}=1$, and $\mathbf{\lambda}_{k}\rightarrow
\mathbf{\lambda}\equiv\sum_{l=1}^{n+1}\lambda_{l}e_{l}$ as $k\rightarrow
\infty$. Thus $\Delta_{n}$ is complete.%
\hfill
$\square$
\end{proof}

\begin{lemma}
\label{jul07l2}For each positive integer $n$, the set
\[
A_{n}\equiv\left\{  (1-\lambda)e_{n+1}+\lambda x:0\leq\lambda\leq
1,\ x\in\Delta_{n-1}\right\}
\]
is dense in $\Delta_{n}$.
\end{lemma}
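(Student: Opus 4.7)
The plan is to show that for every point $p \in \Delta_n$ and every $\varepsilon > 0$ there exists a point of $A_n$ within distance $\varepsilon$ of $p$. Write $p = \sum_{k=1}^{n+1}\mu_k e_k$ with $\mu_k \ge 0$ and $\sum_{k=1}^{n+1}\mu_k = 1$. Classically one would just divide by $1-\mu_{n+1}$ to put $p$ exactly in $A_n$, but constructively we cannot decide whether $1-\mu_{n+1}>0$ or $\mu_{n+1}=1$; this is the main obstacle, and it is handled via cotransitivity.

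Given $\varepsilon > 0$, I would apply cotransitivity to the real number $1-\mu_{n+1} \in [0,1]$ with thresholds $\varepsilon/4 < \varepsilon/2$, obtaining two (not mutually exclusive) cases. In the first case, $1 - \mu_{n+1} > \varepsilon/4$, so $\lambda := 1 - \mu_{n+1}$ is a strictly positive real with known positive lower bound, and we can safely form $x := \lambda^{-1}\sum_{k=1}^{n}\mu_k e_k$. Each coefficient $\mu_k/\lambda$ is $\ge 0$ and they sum to $1$ (since $\sum_{k=1}^n \mu_k = 1-\mu_{n+1}=\lambda$), so $x \in \Delta_{n-1}$; then $(1-\lambda)e_{n+1} + \lambda x = \mu_{n+1}e_{n+1} + \sum_{k=1}^{n}\mu_k e_k = p$, and so $p$ itself lies in $A_n$.

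In the second case, $1 - \mu_{n+1} < \varepsilon/2$, so $\sum_{k=1}^n \mu_k = 1 - \mu_{n+1} < \varepsilon/2$, and hence
\[
\|p - e_{n+1}\|_1 = \sum_{k=1}^{n}\mu_k + |\mu_{n+1}-1| = 2(1-\mu_{n+1}) < \varepsilon.
\]
Since $e_{n+1} = (1-0)\,e_{n+1} + 0 \cdot x$ for any chosen $x \in \Delta_{n-1}$ (e.g.\ $x = e_1$), the point $e_{n+1}$ lies in $A_n$, so there is a point of $A_n$ within $\varepsilon$ of $p$. Taken together, the two cases show that an arbitrary $p \in \Delta_n$ can be approximated to within $\varepsilon$ by a point of $A_n$, which is the required density. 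The only genuinely constructive wrinkle is the replacement of the dichotomy ``$\mu_{n+1} = 1$ vs.\ $\mu_{n+1} < 1$'' by the approximate dichotomy above; once that is in place the rest is routine, and using $\|\cdot\|_1$ (valid by equivalence of norms on $\mathbf{R}^{n+1}$) keeps the estimate clean.
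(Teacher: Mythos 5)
Your proof is correct, and it rests on the same underlying idea as the paper's: one cannot decide whether $\mu_{n+1}=1$, so one replaces that dichotomy by an approximate one and, in the ``far from $e_{n+1}$'' case, divides by $1-\mu_{n+1}$ to exhibit $p$ as exactly a point of $A_n$, while in the ``near'' case one approximates $p$ by $e_{n+1}\in A_n$. The one genuine difference is where the splitting happens: the paper applies the standard alternative ``either $x\neq e_{n+1}$ or $\left\Vert x-e_{n+1}\right\Vert<\varepsilon$'' to the \emph{vector} and then invokes Corollary \ref{sept18c1} (itself resting on Lemma \ref{aug04l1A}) to convert $x\neq e_{n+1}$ into $\xi_{n+1}<1$; you apply cotransitivity directly to the \emph{coordinate} $1-\mu_{n+1}$, which short-circuits that machinery entirely and makes the lemma self-contained. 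Since a point of $\Delta_n$ comes equipped with its barycentric coordinates, your route is the more economical one here; the paper's detour through Corollary \ref{sept18c1} buys nothing for this particular lemma. Two small housekeeping points: you should also record the (one-line) inclusion $A_n\subset\Delta_n$, which the paper proves first and which is part of what ``dense in $\Delta_n$'' asserts, and you should note explicitly that $\lambda=1-\mu_{n+1}\in[0,1]$ so that your point really has the form required in the definition of $A_n$.
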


\begin{proof}
If $v=$ $(1-\lambda)e_{n+1}+\lambda x$ is in $A_{n}$, then there exist
$\lambda_{1},\ldots,\lambda_{n}\geq0$ such that $\sum_{k=1}^{n}\lambda_{k}=1$
and $x=\sum_{k=1}^{n}\lambda_{k}e_{k}$; whence%
\[
v=\left(  1-\lambda\right)  e_{n+1}+\sum_{n=1}^{n}\lambda\lambda_{k}e_{k},
\]
where each $\lambda\lambda_{k}\geq0$ and $\left(  1-\lambda\right)
+\sum_{n=1}^{n}\lambda\lambda_{k}=1$; so $v\in\Delta_{n}$. Hence $A_{n}%
\subset\Delta_{n}$. Now consider $x\equiv\sum_{k=1}^{n+1}\xi_{k}e_{k}\in
\Delta_{n}$, where each $\xi_{k}\geq0$ and $\sum_{k=1}^{n+1}\xi_{k}=1$. If
$x\neq a_{n+1}$, then by Corollary \ref{sept18c1}, $\xi_{n+1}<1$; whence
$\sum_{j=1}^{n}\xi_{j}>0$, \ so
\[
x=\left(  1-\sum_{j=1}^{n}\xi_{j}\right)  e_{n+1}+\sum_{j=1}^{n}\xi_{j}%
\sum_{k=1}^{n}\frac{\xi_{k}}{\sum_{j=1}^{n}\xi_{j}}e_{k},
\]
where $0\leq\sum_{j=1}^{n}\xi_{j}\leq1$,%
\[
\sum_{k=1}^{n}\frac{\xi_{k}}{\sum_{j=1}^{n}\xi_{j}}e_{k}\in\Delta_{n-1},
\]
and therefore $x\in A_{n}$. Given $\varepsilon>0$, we have either $x\neq
a_{n+1}$ and therefore $x\in A_{n}$, or else $\left\Vert x-a_{n+1}\right\Vert
<\varepsilon$, where $a_{n+1}\in A_{n}$. Since $\varepsilon>0$ is arbitrary,
it follows that $A_{n}$ is dense in $\Delta_{n}$.%
\hfill
$\square$
\end{proof}

\begin{proposition}
\label{jul17p1}For each positive integer $n$, $\Delta_{n}$ is compact.
\end{proposition}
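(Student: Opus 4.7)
The plan is to use the standard \textbf{BISH} criterion that a metric space is compact if and only if it is complete and totally bounded. Completeness of $\Delta_n$ has already been established in Lemma \ref{jul17l1}, so the task reduces to proving total boundedness of $\Delta_n$.

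I will argue by induction on $n$. The base case $n=0$ is trivial, since $\Delta_0=\{e_1\}$ is a singleton. For the inductive step, assume $\Delta_{n-1}$ is totally bounded; regarding it in the natural way as a subset of $\mathbf{R}^{n+1}$, consider the mapping
\[
f:[0,1]\times\Delta_{n-1}\longrightarrow\mathbf{R}^{n+1},\qquad f(\lambda,x)=(1-\lambda)e_{n+1}+\lambda x.
\]
Since $\Delta_{n-1}$ is bounded (being totally bounded), $f$ is uniformly continuous on its domain. The Cartesian product $[0,1]\times\Delta_{n-1}$ is totally bounded because each factor is, and the uniformly continuous image of a totally bounded set is totally bounded. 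Hence the set $A_n=f([0,1]\times\Delta_{n-1})$ from Lemma \ref{jul07l2} is totally bounded. By that same lemma, $A_n$ is dense in $\Delta_n$, and a metric space that has a totally bounded dense subset is itself totally bounded (an $\varepsilon/2$-net for the subset serves as an $\varepsilon$-net for the whole space). Combined with completeness, this yields compactness.

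The main obstacle, such as it is, amounts to checking the routine uniform-continuity estimate for $f$ and the standard dense-subset argument; the serious geometric content has already been packaged into Lemma \ref{jul07l2}, which is precisely the tool that inductively reduces $\Delta_n$ to $\Delta_{n-1}$.
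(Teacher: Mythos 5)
Your proof is correct and follows essentially the same route as the paper: reduce to total boundedness via Lemma \ref{jul17l1}, induct on $n$, and realise the dense subset $A_n$ of Lemma \ref{jul07l2} as the uniformly continuous image of the totally bounded set $[0,1]\times\Delta_{n-1}$. The only cosmetic differences are your choice of base case ($n=0$ rather than $n=1$) and your explicit remark that boundedness of $\Delta_{n-1}$ is what makes $f$ uniformly continuous, which is a fair point the paper's displayed estimate glosses over slightly.
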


\begin{proof}
In view of Lemma \ref{jul17l1}, it suffices to prove, by induction, that
$\Delta_{n}$ is totally bounded. Clearly, $\Delta_{1}$ is totally bounded. Let
$n>1$ and suppose that $\Delta_{n-1}$ is totally bounded. Define the mapping
\[
\phi:\left(  \lambda,v\right)  \rightsquigarrow\left(  1-\lambda\right)
e_{n+1}+\lambda v\ \ \ \left(  \lambda\in\left[  0,1\right]  ,v\in\Delta
_{n-1}\right)  .
\]
We have
\[
\left\Vert \phi(\lambda,v)-\phi(\lambda^{\prime},v^{\prime})\right\Vert
_{1}\leq\left\vert \lambda-\lambda^{\prime}\right\vert +\left\Vert
v-v^{\prime}\right\Vert \ \ \left(  \lambda\in\left[  0,1\right]  ,v\in
\Delta_{n-1}\right)  ,
\]
from which it follows that $\phi\ $is uniformly continuous on the totally
bounded set $\left[  0,1\right]  \times\Delta_{n-1}$. The range of $\phi$ is
$A_{n}$, which is therefore totally bounded. Since, by Lemma \ref{jul07l2},
$A_{n}$ is dense in $\Delta_{n}$, the latter set is also totally bounded
\cite[2.2.6]{BVtech}.%
\hfill
$\square$
\end{proof}

A bijection $f$ of a metric space $X$ onto a metric space $Y$ is
\emph{uniformly bicontinuous }if $f$ and its inverse mapping are uniformly
continuous on $X$ and $Y$, respectively.

\begin{proposition}
\label{may29p1}Let $X$ be a normed space, and $a_{1},\ldots,a_{n+1}$ affinely
independent elements of $X$. Then%
\[
f_{n}:\mathbf{\lambda}\rightsquigarrow\sum_{k=1}^{n+1}\lambda_{k}%
a_{k}\ \ (\mathbf{\lambda}\in\mathsf{aff}(\Delta_{n}))
\]
is a uniformly bicontinuous injection of $\mathsf{aff}(\Delta_{n})$ onto
$\mathsf{aff}\left\{  a_{1},\ldots,a_{n+1}\right\}  $.
\end{proposition}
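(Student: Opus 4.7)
The plan is to reduce everything to Lemma \ref{aug04l1A}, which was tailor-made for this purpose. First, I would unpack the domain: $\mathsf{aff}(\Delta_{n})$ consists of exactly those $\mathbf{\lambda}\in\mathbf{R}^{n+1}$ with $\sum_{k=1}^{n+1}\lambda_{k}=1$, since the standard basis vectors $e_{1},\ldots,e_{n+1}$ are themselves affinely independent in $\mathbf{R}^{n+1}$. Thus $f_{n}$ is well-defined, and surjectivity onto $\mathsf{aff}\{a_{1},\ldots,a_{n+1}\}$ is immediate from the definition of the affine hull. For injectivity (in its strong constructive sense), I would invoke Proposition \ref{jul23p1}: whenever $\mathbf{\lambda}\neq\mathbf{\mu}$ lie in $\mathsf{aff}(\Delta_{n})$, the affine independence of the $a_{k}$ forces $f_{n}(\mathbf{\lambda})\neq f_{n}(\mathbf{\mu})$.

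For the uniform continuity of $f_{n}$, I would fix $\varepsilon>0$, apply part (ii) of Lemma \ref{aug04l1A} to obtain a $\delta>0$, and then observe that if $\mathbf{\xi},\mathbf{\eta}\in\mathsf{aff}(\Delta_{n})$ satisfy $\|\mathbf{\xi}-\mathbf{\eta}\|_{1}<\delta$, then in particular $\sum_{k=1}^{n}|\xi_{k}-\eta_{k}|\leq\|\mathbf{\xi}-\mathbf{\eta}\|_{1}<\delta$, so the lemma yields $\|f_{n}(\mathbf{\xi})-f_{n}(\mathbf{\eta})\|<\varepsilon$. Since all norms on $\mathbf{R}^{n+1}$ are equivalent, this gives uniform continuity with respect to any chosen norm on $\mathsf{aff}(\Delta_{n})$.

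For the uniform continuity of $f_{n}^{-1}$, I would use part (i) of Lemma \ref{aug04l1A} in exactly the symmetric way: given $\varepsilon>0$, choose $\delta>0$ from the lemma, and note that whenever $x=f_{n}(\mathbf{\xi})$ and $y=f_{n}(\mathbf{\eta})$ satisfy $\|x-y\|<\delta$, we obtain $\|\mathbf{\xi}-\mathbf{\eta}\|_{1}=\sum_{k=1}^{n+1}|\xi_{k}-\eta_{k}|<\varepsilon$. This is exactly the uniform continuity estimate for $f_{n}^{-1}$ on $\mathsf{aff}\{a_{1},\ldots,a_{n+1}\}$.

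There is essentially no hard step left, since Lemma \ref{aug04l1A} already encapsulates the delicate constructive estimates (derived from the boundedness of the linear bijection $u$ and its inverse in \cite[4.1.8]{BVtech}). The only small point to handle carefully is the choice of norm on $\mathsf{aff}(\Delta_{n})$: since the lemma's estimates are phrased in terms of the $\ell^{1}$-sum of coefficient differences, I would either state the result using $\|\cdot\|_{1}$ throughout or invoke norm equivalence at the end.
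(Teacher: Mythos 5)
Your proof is correct, but it is organized differently from the paper's. The paper does not invoke Lemma \ref{aug04l1A} at all in this proof: it works directly with the bounded linear bijection $F:\mathbf{\lambda}\rightsquigarrow\sum_{k=1}^{n}\lambda_{k}(a_{k}-a_{n+1})$ of $\mathbf{R}^{n}$ onto the span of the vectors $a_{k}-a_{n+1}$ (citing \cite[4.1.4]{BVtech} for the boundedness of $F^{-1}$), writes $f_{n}(\mathbf{\lambda})=F(\lambda_{1},\ldots,\lambda_{n})+a_{n+1}$, and derives the explicit Lipschitz estimates $\left\Vert f_{n}(\mathbf{\lambda})-f_{n}(\mathbf{\mu})\right\Vert \leq\left\Vert F\right\Vert \left\Vert \mathbf{\lambda}-\mathbf{\mu}\right\Vert _{1}$ and $\left\Vert \mathbf{\lambda}-\mathbf{\mu}\right\Vert _{1}\leq2\left\Vert F^{-1}\right\Vert \left\Vert f_{n}(\mathbf{\lambda})-f_{n}(\mathbf{\mu})\right\Vert$, the factor $2$ coming from the observation that $\left\vert \lambda_{n+1}-\mu_{n+1}\right\vert\leq\sum_{k=1}^{n}\left\vert\lambda_{k}-\mu_{k}\right\vert$ on the hyperplane $\sum_{k}\lambda_{k}=1$. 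Your route through parts (i) and (ii) of Lemma \ref{aug04l1A} is legitimate and shorter on paper, since that lemma is stated for arbitrary $\mathbf{\xi},\mathbf{\eta}$ with coordinate sum $1$, i.e.\ for all of $\mathsf{aff}(\Delta_{n})$ once you have identified that set with the hyperplane (which does follow from Proposition \ref{jul22p1}, though note this identification needs only that affine combinations of convex combinations are affine combinations, not the affine independence of the $e_{k}$). What the lemma-based argument buys you is economy; what it costs you is quantitative information: the paper's version exhibits $f_{n}$ and $f_{n}^{-1}$ as Lipschitz maps with explicit constants, whereas the lemma delivers only the $\varepsilon$--$\delta$ form of uniform continuity. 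Since the underlying constructive content of Lemma \ref{aug04l1A} is exactly the same boundedness of $F$ and $F^{-1}$, the two proofs differ in packaging rather than substance, and both are constructively sound.
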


\begin{proof}
Let $V$ be the $n$-dimensional subspace of $X$ with basis vectors
$a_{k}-a_{n+1}\ (1\leq k\leq n)$. By \cite[4.1.4]{BVtech},%
\[
F:\mathbf{\lambda}\rightsquigarrow\sum_{k=1}^{n}\lambda_{k}\left(
a_{k}-a_{n+1}\right)
\]
is a bounded linear injection of $\mathbf{R}^{n}$ onto $V$ with bounded linear
inverse. Moreover, since both $\mathbf{R}^{n}$ and $V$ are finite dimensional,
the operator norm%
\[
\left\Vert F\right\Vert \equiv\sup\left\{  \left\Vert F(\mathbf{\lambda
})\right\Vert :\mathbf{\lambda}\in\mathbf{R}^{n},\left\Vert \mathbf{\lambda
}\right\Vert _{1}\leq1\right\}
\]
and the counterpart for $F^{-1}$ exist. Now, it is clear that $f_{n}$ maps
$\Delta_{n}$ onto $\Sigma$. For each $\mathbf{\lambda}\in\mathsf{aff}%
(\Delta_{n})$,%
\begin{align*}
f_{n}(\mathbf{\lambda})  &  =%
{\textstyle\sum_{k=1}^{n}}
\lambda_{k}a_{k}+\lambda_{n+1}a_{n+1}\\
&  =%
{\textstyle\sum_{k=1}^{n}}
\lambda_{k}a_{k}+\left(  1-%
{\textstyle\sum_{k=1}^{n}}
\lambda_{k}\right)  a_{n+1}\\
&  =%
{\textstyle\sum_{k=1}^{n}}
\lambda_{k}(a_{k}-a_{n+1})+a_{n+1}\\
&  =F(\lambda_{1},\ldots,\lambda_{n})+a_{n+1}.
\end{align*}
Thus if $\mathbf{\lambda},\mathbf{\mu}\in\mathsf{aff}(\Delta_{n})$, then%
\[
f_{n}(\mathbf{\lambda})-f_{n}(\mathbf{\mu})=F(\lambda_{1},\ldots,\lambda
_{n})-F(\mu_{1},\ldots,\mu_{n})=F(\lambda_{1}-\mu_{1},\ldots,\lambda_{n}%
-\mu_{n}).
\]
Hence%
\begin{align*}
\left\Vert f_{n}(\mathbf{\lambda})-f_{n}(\mathbf{\mu})\right\Vert  &
=\left\Vert F(\lambda_{1}-\mu_{1},\ldots,\lambda_{n}-\mu_{n})\right\Vert \\
&  \leq\left\Vert F\right\Vert \left\Vert (\lambda_{1}-\mu_{1},\ldots
,\lambda_{n}-\mu_{n})\right\Vert _{1}\\
&  =\left\Vert F\right\Vert \sum_{k=1}^{n}\left\vert \lambda_{k}-\mu
_{k}\right\vert \leq\left\Vert F\right\Vert \sum_{k=1}^{n+1}\left\vert
\lambda_{k}-\mu_{k}\right\vert =\left\Vert F\right\Vert \left\Vert
\mathbf{\lambda}-\mathbf{\mu}\right\Vert _{1},
\end{align*}
from which we see that $f_{n}$ is uniformly continuous on $\Delta_{n}$. On the
other hand, by Proposition \ref{jul23p1}, $f_{n}$ is injective and so has
inverse mapping $f_{n}^{-1}$ from $\mathsf{aff}\left\{  a_{1},\ldots
,a_{n+1}\right\}  $ onto $\mathsf{aff}(\Delta_{n})$. If $\mathbf{\lambda
},\mathbf{\mu}\in\mathsf{aff}(\Delta_{n})$, then%
\begin{align*}
\left\Vert \mathbf{\lambda}-\mathbf{\mu}\right\Vert _{1}  &  =%
{\textstyle\sum_{k=1}^{n}}
\left\vert \lambda_{k}-\mu_{k}\right\vert +\left\vert \lambda_{n+1}-\mu
_{n+1}\right\vert \\
&  =%
{\textstyle\sum_{k=1}^{n}}
\left\vert \lambda_{k}-\mu_{k}\right\vert +\left\vert \left(  1-\sum_{k=1}%
^{n}\lambda_{k}\right)  -\left(  1-\sum_{k=1}^{n}\mu_{k}\right)  \right\vert
\\
&  =\sum_{k=1}^{n}\left\vert \lambda_{k}-\mu_{k}\right\vert +\left\vert
\sum_{k=1}^{n}\left(  \lambda_{k}-\mu_{k}\right)  \right\vert \\
&  \leq2\sum_{k=1}^{n}\left\vert \lambda_{k}-\mu_{k}\right\vert \\
&  =2\left\Vert (\lambda_{1}-\mu_{1},\ldots,\lambda_{n}-\mu_{n})\right\Vert
_{1}\\
&  =2\left\Vert F^{-1}\circ F(\lambda_{1}-\mu_{1},\ldots,\lambda_{n}-\mu
_{n})\right\Vert _{1}\\
&  \leq2\left\Vert F^{-1}\right\Vert \left\Vert F(\lambda_{1}-\mu_{1}%
,\ldots,\lambda_{n}-\mu_{n})\right\Vert \\
&  =2\left\Vert F^{-1}\right\Vert \left\Vert f_{n}(\mathbf{\lambda}%
)-f_{n}(\mathbf{\mu})\right\Vert ,
\end{align*}
from which we see that $f_{n}^{-1}$ is uniformly continuous on $\mathsf{aff}%
\left\{  a_{1},\ldots,a_{n+1}\right\}  $.%
\hfill
$\square$
\end{proof}

We call two $n$-simplices $\Sigma$ and $\Sigma^{\prime}$ \emph{equivalent }if
there is a uniformly bicontinuous bijection of\emph{\ }$\Sigma$ onto
$\Sigma^{\prime}$; such a mapping is called an \emph{equivalence }of $\Sigma$
with $\Sigma^{\prime}$.

\begin{corollary}
\label{2507c1}Under the hypotheses of \emph{Proposition \ref{may29p1}}, let
$\Sigma$ be the $n$-simplex with vertices $a_{1},\ldots,a_{n+1}$. Then the
restriction of $f_{n}$ to $\Delta_{n}$ is an equivalence of $\Delta_{n}$ with
$\Sigma$.
\end{corollary}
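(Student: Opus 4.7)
The plan is to recognize that Corollary \ref{2507c1} is essentially a restriction statement: Proposition \ref{may29p1} already supplies a uniformly bicontinuous bijection $f_n$ from $\mathsf{aff}(\Delta_n)$ onto $\mathsf{aff}\{a_1,\ldots,a_{n+1}\}$, and I only need to verify that cutting this map down to $\Delta_n$ lands exactly on $\Sigma$ and preserves all the relevant properties.

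First I would observe that $\Delta_n \subset \mathsf{aff}(\Delta_n)$, so the restriction $f_n|_{\Delta_n}$ is well defined. Next I would identify the image: by definition,
\[
\Delta_n = \left\{\boldsymbol{\lambda}\in\mathbf{R}^{n+1}:\lambda_k\geq 0\ (1\leq k\leq n+1),\ \textstyle\sum_{k=1}^{n+1}\lambda_k=1\right\},
\]
and $\Sigma=\{\sum_{k=1}^{n+1}\lambda_k a_k:\lambda_k\geq 0,\ \sum\lambda_k=1\}$, so clearly $f_n(\Delta_n)=\Sigma$. Injectivity of $f_n|_{\Delta_n}$ follows from injectivity of $f_n$ on the larger affine hull, which was established in Proposition \ref{may29p1} via Proposition \ref{jul23p1}.

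For the bicontinuity, I would simply note that the restriction of a uniformly continuous map to a subset is uniformly continuous, so $f_n|_{\Delta_n}:\Delta_n\to\Sigma$ is uniformly continuous. The inverse of this restriction is exactly $f_n^{-1}|_\Sigma$, and since $f_n^{-1}$ is uniformly continuous on $\mathsf{aff}\{a_1,\ldots,a_{n+1}\}\supset\Sigma$, its restriction to $\Sigma$ is uniformly continuous as well. Hence $f_n|_{\Delta_n}$ is an equivalence of $\Delta_n$ with $\Sigma$.

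There is no real obstacle here: everything reduces to invoking Proposition \ref{may29p1} and checking the image calculation. The only point requiring a sentence of care is that the convex-hull description of $\Sigma$ coincides exactly with $f_n(\Delta_n)$, which is immediate from the definitions and needs no constructive subtlety.
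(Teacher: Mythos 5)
Your proposal is correct and matches the paper's intent exactly: the paper gives no separate proof for this corollary, treating it as immediate from Proposition \ref{may29p1} (whose proof already remarks that $f_{n}$ maps $\Delta_{n}$ onto $\Sigma$), and your elaboration — restriction preserves injectivity and uniform continuity in both directions, and the image is $\Sigma$ by definition of the convex hull — is precisely the routine verification being left to the reader.
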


\begin{corollary}
\label{may30c1}Any two $n$-simplices are equivalent. In fact, if $\Sigma$ is
an $n$-simplex with vertices $a_{1},\ldots.a_{n+1}$, and $\Sigma^{\prime}$ is
an $n$-simplex with vertices $a_{1}^{\prime},\ldots,a_{n+1}^{\prime}$, then
the mapping $\sum_{k=1}^{n+1}\lambda_{k}a_{k}\rightsquigarrow\sum_{k=1}%
^{n+1}\lambda_{k}a_{k}^{\prime}$ is an equivalence of $\Sigma$ with
$\Sigma^{\prime}$.
\end{corollary}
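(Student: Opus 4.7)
The plan is to assemble this from Corollary \ref{2507c1}. Applied to the affinely independent vertices $a_{1},\ldots,a_{n+1}$ of $\Sigma$, that corollary produces a uniformly bicontinuous bijection
\[
f_{n}:\mathbf{\lambda}\rightsquigarrow\sum_{k=1}^{n+1}\lambda_{k}a_{k}
\]
from $\Delta_{n}$ onto $\Sigma$. Applying the same corollary to the vertices $a_{1}^{\prime},\ldots,a_{n+1}^{\prime}$ of $\Sigma^{\prime}$ yields a second equivalence
\[
f_{n}^{\prime}:\mathbf{\lambda}\rightsquigarrow\sum_{k=1}^{n+1}\lambda_{k}a_{k}^{\prime}
\]
of $\Delta_{n}$ with $\Sigma^{\prime}$.

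First I would set $\phi\equiv f_{n}^{\prime}\circ f_{n}^{-1}$ and note that each of $f_{n},f_{n}^{-1},f_{n}^{\prime},(f_{n}^{\prime})^{-1}$ is uniformly continuous on its domain. Since the composition of uniformly continuous mappings between metric spaces is uniformly continuous, $\phi$ is a bijection of $\Sigma$ onto $\Sigma^{\prime}$, and both $\phi$ and its inverse $f_{n}\circ(f_{n}^{\prime})^{-1}$ are uniformly continuous. Hence $\phi$ is an equivalence of $\Sigma$ with $\Sigma^{\prime}$, which already proves the first sentence of the statement.

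It then remains to check that $\phi$ is exactly the mapping written in the corollary. Given $x\in\Sigma$ expressed as $x=\sum_{k=1}^{n+1}\lambda_{k}a_{k}$ with $\mathbf{\lambda}\in\Delta_{n}$, Corollary \ref{jul24c1A} tells us that $\mathbf{\lambda}$ is the \emph{unique} such barycentric coordinate vector, so $f_{n}^{-1}(x)=\mathbf{\lambda}$ and therefore $\phi(x)=f_{n}^{\prime}(\mathbf{\lambda})=\sum_{k=1}^{n+1}\lambda_{k}a_{k}^{\prime}$, as claimed. No step presents a genuine obstacle: the proof is a bookkeeping exercise in composing two previously established equivalences. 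The one subtlety worth flagging is the constructive appeal to uniqueness of barycentric coordinates, without which the formula $\sum\lambda_{k}a_{k}\rightsquigarrow\sum\lambda_{k}a_{k}^{\prime}$ would not even be well-defined as a function on $\Sigma$.
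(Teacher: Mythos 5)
Your argument is correct and is precisely the intended route: the paper's own proof simply says the result is a straightforward consequence of Corollary \ref{2507c1}, and your composition $f_{n}^{\prime}\circ f_{n}^{-1}$, together with the appeal to Corollary \ref{jul24c1A} for well-definedness of the explicit formula, is the natural way to fill in those details.
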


\begin{proof}
This is a straightforward consequence of Corollary \ref{2507c1}\emph{.}%
\hfill
$\square$
\end{proof}

\begin{corollary}
\label{jul17c1}Every $n$-simplex is compact.
\end{corollary}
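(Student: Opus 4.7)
The plan is to transfer compactness from the standard simplex $\Delta_n$ (Proposition \ref{jul17p1}) to an arbitrary $n$-simplex $\Sigma$ using the uniformly bicontinuous bijection supplied by Corollary \ref{2507c1}. Recall that, constructively, a metric space is compact precisely when it is complete and totally bounded, so it suffices to verify these two properties for $\Sigma$.

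First I would fix an $n$-simplex $\Sigma$ with vertices $a_1,\ldots,a_{n+1}$ and invoke Corollary \ref{2507c1} to obtain the equivalence $f_n\upharpoonright\Delta_n\colon\Delta_n\to\Sigma$. Total boundedness of $\Sigma$ then follows at once: by Proposition \ref{jul17p1}, $\Delta_n$ is totally bounded, and the image of a totally bounded set under a uniformly continuous map is totally bounded (see e.g.\ \cite[2.2.6]{BVtech}), so $\Sigma=f_n(\Delta_n)$ is totally bounded.

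For completeness, I would take an arbitrary Cauchy sequence $(y_k)_{k\ge 1}$ in $\Sigma$ and set $\mathbf{\lambda}_k\equiv f_n^{-1}(y_k)\in\Delta_n$. Since $f_n^{-1}$ is uniformly continuous on $\Sigma$, $(\mathbf{\lambda}_k)$ is Cauchy in $\Delta_n$; by Lemma \ref{jul17l1} (completeness of $\Delta_n$), it converges to some $\mathbf{\lambda}\in\Delta_n$. Uniform continuity of $f_n$ then gives $y_k=f_n(\mathbf{\lambda}_k)\to f_n(\mathbf{\lambda})\in\Sigma$, so $\Sigma$ is complete.

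There is no genuine obstacle here; the whole point of the preceding development (culminating in Proposition \ref{may29p1} and Corollary \ref{2507c1}) was to reduce questions about general $n$-simplices to the concrete case of $\Delta_n$, and the present corollary is simply the payoff. If one preferred, one could instead cite Corollary \ref{may30c1} directly to say that $\Sigma$ is equivalent to $\Delta_n$ and then note that uniform bicontinuity preserves completeness and total boundedness; the argument is the same either way.
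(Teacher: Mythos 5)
Your proposal is correct and follows exactly the paper's own argument: total boundedness of $\Sigma$ is obtained as the uniformly continuous image of the totally bounded set $\Delta_n$, and completeness is transferred via the uniform continuity of $f_n^{-1}$ and $f_n$ together with Lemma \ref{jul17l1}. The only difference is that you spell out the completeness step, which the paper leaves as ``simple to show.''
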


\begin{proof}
Let $\Sigma$ be an $n$-simplex with vertices $a_{1},\ldots,a_{n+1}$ in a
normed space $X$. In the notation of Proposition \ref{may29p1}, since
$\Delta_{n}$ is compact (Proposition \ref{jul17p1}) and $f_{n}$ is uniformly
continuous, we see from \cite[2.2.6]{BVtech} that $f_{n}(\Delta_{n})$---that
is, $\Sigma$---is totally bounded. On the other hand, from Lemma \ref{jul17l1}
and a simple application of the uniform continuity of the functions
$f_{n}^{-1}$ and $f_{n}$ it is simple to show that $\Sigma$ is complete and
hence compact.%
\hfill
$\square$
\end{proof}

\begin{corollary}
\label{jul23c1}Let $\Sigma$ be an $n$-simplex $\Sigma$ with vertices
$a_{1},\ldots,a_{n+1}$ in a real normed linear space $X$, let $1\leq\nu\leq
n+1$, and let $F$ be the face of $\Sigma$ opposite to $a_{\nu}$. Let
$x=\sum_{k=1}^{n+1}\lambda_{k}a_{k}\in\Sigma$, where $\mathbf{\lambda}%
\in\Delta_{n}$. Then $\rho(x,F)>0$ if and only if $\lambda_{\nu}>0$.
\end{corollary}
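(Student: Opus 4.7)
My strategy is to apply Lemma~\ref{aug04l1A} in both directions, after first invoking the remark preceding Proposition~\ref{jul23p1} to reindex the vertices so that we may take $\nu = n+1$; then $F$ consists precisely of those $y = \sum_{k=1}^{n+1}\eta_k a_k$ with $\mathbf{\eta}\in\Delta_n$ and $\eta_{n+1}=0$. For the direction $\lambda_\nu > 0 \Rightarrow \rho(x,F) > 0$, I would apply part~(i) of Lemma~\ref{aug04l1A} with $\varepsilon := \lambda_{n+1}$ to obtain $\delta>0$ such that $\|x-y\| < \delta$ forces $\sum_{k=1}^{n+1}|\lambda_k - \eta_k| < \lambda_{n+1}$. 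For any $y\in F$, however, $\eta_{n+1}=0$ gives $\sum_{k=1}^{n+1}|\lambda_k-\eta_k| \geq |\lambda_{n+1}| = \lambda_{n+1}$; hence no element of $F$ can be within $\delta$ of $x$, so $\rho(x,F)\geq\delta>0$.

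For the converse, I would pick a rational $r$ with $0 < r < \rho(x,F)$ and apply part~(ii) of Lemma~\ref{aug04l1A} with $\varepsilon := r$ to obtain $\delta > 0$ such that $\sum_{k=1}^{n} |\lambda_k - \eta_k| < \delta$ forces $\|x-y\| < r$. The crucial step is to exhibit a concrete $y\in F$ whose first $n$ barycentric coordinates almost match those of $x$: define $\eta_k := \lambda_k$ for $1\leq k \leq n-1$, $\eta_n := \lambda_n + \lambda_{n+1}$, and $\eta_{n+1} := 0$. Then each $\eta_k \geq 0$ with $\sum_{k=1}^{n+1}\eta_k = 1$, so $\mathbf{\eta}\in\Delta_n$ and $y := \sum_k \eta_k a_k$ lies in $F$; moreover $\sum_{k=1}^n |\lambda_k - \eta_k| = \lambda_{n+1}$. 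Since $\|x-y\| \geq \rho(x,F) > r$, the contrapositive of~(ii) yields $\lambda_{n+1} \geq \delta > 0$.

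The main obstacle is characteristically constructive: in the second direction one cannot simply invoke a ``nearest point'' of $F$, but must explicitly produce a single $y\in F$ whose $\ell^1$ barycentric deviation from $\mathbf{\lambda}$ is controlled by $\lambda_\nu$. Collapsing all the mass $\lambda_{n+1}$ onto the single coordinate $\eta_n$ keeps $\mathbf{\eta}$ in $\Delta_n$ while making the deviation in the first $n$ coordinates exactly $\lambda_{n+1}$, which is precisely what Lemma~\ref{aug04l1A}(ii) requires in order to deliver the positive lower bound on $\lambda_\nu$.
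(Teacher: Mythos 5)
Your proof is correct, but it takes a genuinely different route from the paper's. The paper's argument for the forward direction relies on $F$ being compact (Corollary \ref{jul17c1}), hence complete and located, and then invokes the proximinality result \cite[3.3.1]{BVtech} to produce a single point $y\in F$ with the property that $x\neq y$ implies $\rho(x,F)>0$; the apartness $x\neq y$ is then supplied by Proposition \ref{jul23p1}, since every point of $F$ has $(n+1)^{\text{th}}$ barycentric coordinate $0$. You instead extract from Lemma \ref{aug04l1A}(i) a uniform $\delta>0$ bounding $\left\Vert x-y\right\Vert$ below for \emph{every} $y\in F$, which gives $\rho(x,F)\geq\delta$ directly (you still need Corollary \ref{jul17c1} to know $F$ is located so that $\rho(x,F)$ exists, so say so). For the converse, the paper writes $0<\left\Vert x-\sum_{k=1}^{n}\lambda_{k}a_{k}\right\Vert =\lambda_{n+1}\left\Vert a_{n+1}\right\Vert$; as printed this is problematic, since $\sum_{k=1}^{n}\lambda_{k}a_{k}$ has coefficient sum $1-\lambda_{n+1}$ and so need not lie in $F$, and $\left\Vert a_{n+1}\right\Vert$ could vanish. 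Your version --- exhibiting the explicit point $y$ with coordinates $(\lambda_{1},\ldots,\lambda_{n-1},\lambda_{n}+\lambda_{n+1},0)\in\Delta_{n}$, so that $y\in F$ and $\sum_{k=1}^{n}\left\vert \lambda_{k}-\eta_{k}\right\vert =\lambda_{n+1}$, and then reading off $\lambda_{n+1}\geq\delta$ from the constructively valid contrapositive of Lemma \ref{aug04l1A}(ii) together with $\lnot(a<b)\Rightarrow a\geq b$ --- sidesteps both difficulties and is the cleaner argument. The trade-off is that your proof leans on the quantitative Lemma \ref{aug04l1A} in both directions, whereas the paper's forward direction gets by with the softer locatedness machinery; but what your approach buys is an explicit positive lower bound $\delta$ for $\lambda_{\nu}$ in terms of $\rho(x,F)$, and a converse that actually works as stated.
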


\begin{proof}
Without loss of generality, we may assume that $\nu=n+1$. Being an $\left(
n-1\right)  $-simplex, $F$ is compact (by Corollary \ref{jul17c1}) and hence
both complete and located. By \cite[3.3.1]{BVtech}, there exists $\mu\in
\Delta_{n-1}$ such that if $x\neq y\equiv\sum_{k=1}^{n}\mu_{j}a_{k}$, then
$\rho(x,F)>0$. If $\lambda_{n+1}>0$, then Proposition \ref{jul23p1} shows that
$x\neq y$, so $\rho(x,F)>0$. Conversely, if $\rho(x,F)>0$, then%
\[
0<\left\Vert x-\sum_{k=1}^{n}\lambda_{k}a_{k}\right\Vert =\lambda
_{n+1}\left\Vert a_{n+1}\right\Vert ,
\]
so $\lambda_{n+1}>0$.%
\hfill
$\square$
\end{proof}

\begin{lemma}
\label{aug1}Let $\mathbf{\lambda}\in\Delta_{n}$ and $\mathbf{\mu}\in
\mathbf{R}^{n+1}$. Then%
\[
\sum_{k=1}^{n}\left\vert \mu_{k}(1-\lambda_{n+1})-\lambda_{k}(1-\mu
_{n+1})\right\vert \leq2\sum_{k=1}^{n+1}\left\vert \mu_{k}-\lambda
_{k}\right\vert .
\]

\end{lemma}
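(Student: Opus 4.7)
The plan is to prove the bound by a single carefully chosen algebraic decomposition of the summand, followed by the triangle inequality and the constraint $\sum_{k=1}^{n+1}\lambda_k=1$ that is built into $\mathbf{\lambda}\in\Delta_n$.

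First I would insert $\pm\lambda_k(1-\lambda_{n+1})$ inside the absolute value to split
\[
\mu_k(1-\lambda_{n+1})-\lambda_k(1-\mu_{n+1})=(\mu_k-\lambda_k)(1-\lambda_{n+1})+\lambda_k(\mu_{n+1}-\lambda_{n+1}).
\]
A direct expansion verifies this identity. The point of the splitting is that the two resulting terms each involve one of the $n+1$ coordinate differences $|\mu_k-\lambda_k|$ and $|\mu_{n+1}-\lambda_{n+1}|$, weighted by nonnegative factors bounded above by $1$.

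Next I would apply the triangle inequality and use $0\leq 1-\lambda_{n+1}\leq1$ and $\lambda_k\geq0$ to obtain
\[
|\mu_k(1-\lambda_{n+1})-\lambda_k(1-\mu_{n+1})|\leq|\mu_k-\lambda_k|(1-\lambda_{n+1})+\lambda_k\,|\mu_{n+1}-\lambda_{n+1}|.
\]
Summing from $k=1$ to $n$ and using $\sum_{k=1}^{n}\lambda_k=1-\lambda_{n+1}\leq1$ together with $1-\lambda_{n+1}\leq 1$, the right side is bounded by
\[
\sum_{k=1}^{n}|\mu_k-\lambda_k|+|\mu_{n+1}-\lambda_{n+1}|=\sum_{k=1}^{n+1}|\mu_k-\lambda_k|,
\]
which is even sharper than the claimed inequality (the factor of $2$ is then absorbed trivially). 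There is no genuine obstacle here; the only thing that has to be gotten right is the choice of the $\pm$ term in the decomposition so that the coefficient $1-\lambda_{n+1}$, which is automatically in $[0,1]$ because $\mathbf{\lambda}\in\Delta_n$, appears as the multiplier of $|\mu_k-\lambda_k|$. Everything else is the triangle inequality and the simplex constraint.
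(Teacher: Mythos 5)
Your proof is correct and uses essentially the same decomposition as the paper: the identity $\mu_k(1-\lambda_{n+1})-\lambda_k(1-\mu_{n+1})=(\mu_k-\lambda_k)(1-\lambda_{n+1})+\lambda_k(\mu_{n+1}-\lambda_{n+1})$ is exactly the paper's, merely with the first two terms grouped so that the coefficient is $1-\lambda_{n+1}\leq 1$ rather than being bounded by $1+\lambda_{n+1}\leq 2$. This grouping even yields the sharper constant $1$ in place of $2$, which of course still implies the stated inequality.
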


\begin{proof}
For $1\leq k\leq n$ we have%
\begin{align*}
\left\vert \mu_{k}(1-\lambda_{n+1})-\lambda_{k}(1-\mu_{n+1})\right\vert  &
=\left\vert \mu_{k}-\lambda_{k}+\lambda_{n+1}(\lambda_{k}-\mu_{k})+\lambda
_{k}(\mu_{n+1}-\lambda_{n+1})\right\vert \\
&  \leq(1+\lambda_{n+1})\left\vert \mu_{k}-\lambda_{k}\right\vert +\lambda
_{k}\left\vert \mu_{n+1}-\lambda_{n+1}\right\vert \\
&  \leq2\left\vert \mu_{k}-\lambda_{k}\right\vert +\left\vert \mu
_{n+1}-\lambda_{n+1}\right\vert \lambda_{k}.
\end{align*}
Hence%
\begin{align}
\sum_{k=1}^{n}\left\vert \mu_{k}(1-\lambda_{n+1})-\lambda_{k}(1-\mu
_{n+1})\right\vert  &  \leq\sum_{k=1}^{n}2\left\vert \mu_{k}-\lambda
_{k}\right\vert +\left\vert \mu_{n+1}-\lambda_{n+1}\right\vert \sum_{k=1}%
^{n}\lambda_{k}\nonumber\\
&  \leq\sum_{k=1}^{n}2\left\vert \mu_{k}-\lambda_{k}\right\vert +\left\vert
\mu_{n+1}-\lambda_{n+1}\right\vert \nonumber\\
&  \leq2\sum_{k=1}^{n+1}\left\vert \mu_{k}-\lambda_{k}\right\vert .
\tag*{$\square$}%
\end{align}

\end{proof}

\begin{lemma}
\label{aug2A}Let $\mathbf{\lambda}\in\Delta_{n}$ be such that $\lambda
_{n+1}<1$, let $0<\theta<1$, and let $\mathbf{\mu}\in\mathbf{R}^{n+1}$ be such
that $\left\vert 1-\mu_{n+1}\right\vert \geq\theta\left(  1-\lambda
_{n+1}\right)  $. Then%
\[
\sum_{k=1}^{n}\left\vert \frac{\mu_{k}}{1-\mu_{n+1}}-\frac{\lambda_{k}%
}{1-\lambda_{n+1}}\right\vert \leq\frac{2}{\theta\left(  1-\lambda
_{n+1}\right)  ^{2}}\sum_{k=1}^{n+1}\left\vert \mu_{k}-\lambda_{k}\right\vert
.
\]

\end{lemma}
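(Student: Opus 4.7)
The plan is to reduce this directly to Lemma \ref{aug1} by putting each term in the sum over a common denominator. Specifically, for each $k$ with $1 \leq k \leq n$, a routine manipulation gives
\[
\frac{\mu_k}{1-\mu_{n+1}} - \frac{\lambda_k}{1-\lambda_{n+1}} = \frac{\mu_k(1-\lambda_{n+1}) - \lambda_k(1-\mu_{n+1})}{(1-\mu_{n+1})(1-\lambda_{n+1})},
\]
so the numerator is precisely the quantity estimated term-by-term in Lemma \ref{aug1}.

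Next I would verify that the division makes constructive sense: the hypothesis $\lambda_{n+1} < 1$ gives $1 - \lambda_{n+1} > 0$, and then $|1-\mu_{n+1}| \geq \theta(1-\lambda_{n+1}) > 0$ ensures that $\mu_{n+1}$ is bounded away from $1$. Factoring the (constant) denominator out of the sum over $k$, I obtain
\[
\sum_{k=1}^{n}\left|\frac{\mu_k}{1-\mu_{n+1}} - \frac{\lambda_k}{1-\lambda_{n+1}}\right| = \frac{1}{|1-\mu_{n+1}|\,(1-\lambda_{n+1})} \sum_{k=1}^{n}\bigl|\mu_k(1-\lambda_{n+1})-\lambda_k(1-\mu_{n+1})\bigr|.
\]

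Then I apply Lemma \ref{aug1} to the sum on the right, which immediately bounds it by $2\sum_{k=1}^{n+1}|\mu_k-\lambda_k|$. Finally, using the hypothesis $|1-\mu_{n+1}| \geq \theta(1-\lambda_{n+1})$ to estimate the denominator from below by $\theta(1-\lambda_{n+1})^2$ yields the claimed inequality.

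There isn't really a significant obstacle here: the shape of Lemma \ref{aug1} was clearly designed to be the workhorse for this estimate, so once the common denominator is formed the result is almost automatic. The only point demanding a touch of care is the constructive justification of the division, which the hypothesis $|1-\mu_{n+1}| \geq \theta(1-\lambda_{n+1}) > 0$ handles cleanly.
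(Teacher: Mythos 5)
Your proposal is correct and is essentially identical to the paper's proof: the paper also puts each term over the common denominator $(1-\mu_{n+1})(1-\lambda_{n+1})$, factors the constant denominator out of the sum, applies Lemma \ref{aug1} to the resulting numerator, and bounds the denominator below via $\left\vert 1-\mu_{n+1}\right\vert \geq\theta\left(1-\lambda_{n+1}\right)$. Your extra remark on the constructive legitimacy of the division is a sensible observation but does not change the argument.
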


\begin{proof}
By Lemma \ref{aug1},%
\begin{align}
\sum_{k=1}^{n}\left\vert \frac{\mu_{k}}{1-\mu_{n+1}}-\frac{\lambda_{k}%
}{1-\lambda_{n+1}}\right\vert  &  =\sum_{k=1}^{n}\left\vert \frac{\mu
_{k}(1-\lambda_{n+1})-\lambda_{k}(1-\mu_{n+1})}{(1-\mu_{n+1})(1-\lambda
_{n+1})}\right\vert \nonumber\\
&  =\sum_{k=1}^{n}\frac{\left\vert \mu_{k}(1-\lambda_{n+1})-\lambda_{k}%
(1-\mu_{n+1})\right\vert }{\left\vert 1-\mu_{n+1}\right\vert \left(
1-\lambda_{n+1}\right)  }\nonumber\\
&  =\frac{\sum_{k=1}^{n}\left\vert \mu_{k}(1-\lambda_{n+1})-\lambda_{k}%
(1-\mu_{n+1})\right\vert }{\left\vert 1-\mu_{n+1}\right\vert \left(
1-\lambda_{n+1}\right)  }\nonumber\\
&  \leq\frac{2}{\theta\left(  1-\lambda_{n+1}\right)  ^{2}}\sum_{k=1}%
^{n+1}\left\vert \mu_{k}-\lambda_{k}\right\vert . \tag*{$\square$}%
\end{align}

\end{proof}

We define%
\[
\Delta_{n}^{+}\equiv\left\{  \mathbf{\lambda}\in\Delta_{n}:\lambda_{k}>0\text{
}(0\leq k\leq n+1)\right\}  .
\]

\begin{proposition}
\label{jul31l1}Let $a_{1},\ldots,a_{n+1}$ be affinely independent elements of
a real normed linear space $X$, and let $F=\mathsf{co}\left\{  a_{1}%
,\ldots,a_{n}\right\}  .$ Let $c=\sum_{k=1}^{n+1}\lambda_{k}a_{k}$, where
$\mathbf{\lambda}\in\Delta_{n}$ and $0<\lambda_{n+1}<1$. Then%
\[
b\equiv\sum_{k=1}^{n}\frac{\lambda_{k}}{1-\lambda_{n+1}}a_{k}%
\]
is the unique point in which the line $L:a_{n+1}+\mathbf{R}(c-a_{n+1})$ meets
$F$. For each $r>0$ there exists $\delta>0$ such that if $\mathbf{\xi}%
\in\mathbf{R}^{n+1}$, $\sum_{k=1}^{n+1}\xi_{k}=1$, $x=\sum_{k=1}^{n+1}\xi
_{k}a_{k}$, and $\left\Vert x-c\right\Vert <\delta$, then $0<\xi_{n+1}<1$ and%
\begin{equation}
\sum_{k=1}^{n}\frac{\xi_{k}}{1-\xi_{n+1}}a_{k}\in B_{X}(b,r)\cap
\mathsf{aff}(F). \label{La4}%
\end{equation}

\end{proposition}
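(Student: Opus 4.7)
The plan is to handle the two assertions separately, both relying on the uniqueness of barycentric coordinates (Corollary \ref{jul24c1A}) together with the quantitative control provided by Lemmas \ref{aug04l1A} and \ref{aug2A}.

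For the identification of $b$ as the unique intersection $L\cap F$, I would first rearrange $c=\sum_{k=1}^{n+1}\lambda_k a_k$ as $c=\lambda_{n+1}a_{n+1}+(1-\lambda_{n+1})b$, so $b=a_{n+1}+(1-\lambda_{n+1})^{-1}(c-a_{n+1})\in L$; since the coefficients $\lambda_k/(1-\lambda_{n+1})$ are nonnegative and sum to $1$, we have $b\in F$. For uniqueness, any point on $L$ takes the form $(1-t)a_{n+1}+tc=t\sum_{k=1}^{n}\lambda_k a_k+(1-t(1-\lambda_{n+1}))a_{n+1}$; by Corollary \ref{jul24c1A} the barycentric representation relative to $a_1,\ldots,a_{n+1}$ is unique, so membership in $\mathsf{aff}\{a_1,\ldots,a_n\}\supset F$ forces the $a_{n+1}$-coefficient to vanish, giving $t=1/(1-\lambda_{n+1})$ and recovering $b$.

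For the perturbation statement I would chain three estimates, working backwards from $r>0$. First, introduce the padded vectors $\widetilde{\xi},\widetilde{\lambda}\in\mathbf{R}^{n+1}$ with $\widetilde{\xi}_k=\xi_k/(1-\xi_{n+1})$, $\widetilde{\lambda}_k=\lambda_k/(1-\lambda_{n+1})$ for $k\leq n$ and $\widetilde{\xi}_{n+1}=\widetilde{\lambda}_{n+1}=0$; both sum to $1$, and by Lemma \ref{aug04l1A}(ii) there is $\delta'>0$ such that $\sum_{k=1}^{n}|\widetilde{\xi}_k-\widetilde{\lambda}_k|<\delta'$ implies $\bigl\lVert\sum_{k=1}^{n}\widetilde{\xi}_k a_k-b\bigr\rVert<r$. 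Next, choose $\varepsilon>0$ with $\varepsilon<\min\{\lambda_{n+1},(1-\lambda_{n+1})/2\}$ and $4\varepsilon/(1-\lambda_{n+1})^2<\delta'$. Whenever $\sum_{k=1}^{n+1}|\xi_k-\lambda_k|<\varepsilon$, we obtain $0<\xi_{n+1}<1$ and $1-\xi_{n+1}>(1-\lambda_{n+1})/2$, so Lemma \ref{aug2A} with $\theta=1/2$ gives the required fractional-coefficient bound. Finally, Lemma \ref{aug04l1A}(i) supplies the $\delta>0$ such that $\lVert x-c\rVert<\delta$ ensures $\sum_{k=1}^{n+1}|\xi_k-\lambda_k|<\varepsilon$. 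The sum-to-$1$ property of $\widetilde{\xi}$ places the resulting point in $\mathsf{aff}(F)$.

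I expect the main obstacle to be the constructive bookkeeping: to apply Lemma \ref{aug2A} one must first \emph{establish}, from a single control on $\sum|\xi_k-\lambda_k|$, both the strict inequality $\xi_{n+1}<1$ and a uniform lower bound on $1-\xi_{n+1}$, which is what dictates the simultaneous threshold conditions on $\varepsilon$. Once those are built in, the rest is a mechanical chaining of the three lemmas, with the observation that Lemma \ref{aug04l1A}(ii) can be invoked on the normalised coefficients only after padding with a zero so that the hypothesis on the $(n+1)$-sum is preserved.
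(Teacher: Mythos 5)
Your proof is correct and follows essentially the same route as the paper's: the same computation identifying $b$ as the unique point of $L\cap F$ (the $a_{n+1}$-coefficient of $(1-t)a_{n+1}+tc$ must vanish), and the same chain for the perturbation claim (Lemma \ref{aug04l1A}(i) to pass from $\left\Vert x-c\right\Vert<\delta$ to a bound on $\sum_{k}\left\vert \xi_{k}-\lambda_{k}\right\vert$, threshold conditions forcing $0<\xi_{n+1}<1$ and $1-\xi_{n+1}>\frac{1}{2}(1-\lambda_{n+1})$, then Lemma \ref{aug2A} with $\theta=\frac{1}{2}$). The only cosmetic difference is the final conversion of the coefficient estimate into a norm estimate: the paper uses the elementary bound $\left\Vert \sum_{k=1}^{n}\zeta_{k}a_{k}\right\Vert \leq M\sum_{k=1}^{n}\left\vert \zeta_{k}\right\vert$ with $M=\max_{k}\left\Vert a_{k}\right\Vert$ (building $M$ into the choice of $\varepsilon$), whereas you invoke Lemma \ref{aug04l1A}(ii) on zero-padded coefficient vectors, which is equally valid.
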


\begin{proof}
Since $\lambda_{n+1}=1-\sum_{k=1}^{n}\lambda_{k}<1,$ $\frac{\lambda_{k}%
}{1-\lambda_{n+1}}\geq0$ for each $k\leq n$, and $\sum_{k=1}^{n}\frac
{\lambda_{k}}{1-\lambda_{n+1}}=1$, $b$ is well defined and belongs to $F$. On
the other hand, if the line $L$ meets $F$ at the point%
\begin{align*}
\left(  1-t\right)  a_{n+1}+tc  &  =\left(  1-t\right)  a_{n+1}+\sum
_{k=1}^{n+1}t\lambda_{k}a_{k}\\
&  =\left(  1-t+t\lambda_{n+1}\right)  a_{n+1}+\sum_{k=1}^{n}t\lambda_{k}%
a_{k},
\end{align*}
then $1-t+t\lambda_{n+1}=0$, $t=1/(1-\lambda_{n+1})$, and therefore the point
in question equals $b$.

Now let $r>0$,%
\[
M=\max_{1\leq k\leq n+1}\left\Vert a_{k}\right\Vert
\]
and%
\[
\varepsilon=\frac{1}{2}\min\left\{  \lambda_{n+1},1-\lambda_{n+1}%
,\frac{\left(  1-\lambda_{n+1}\right)  ^{2}r}{2M}\right\}  .
\]
By Lemma \ref{aug04l1A}, there exists $\delta>0$ such that if$\ \mathbf{\xi
}\in\mathbf{R}^{n+1}$, $\sum_{k=1}^{n+1}\xi_{k}=1$, $x=\sum_{k=1}^{n+1}\xi
_{k}a_{k}$, and $\left\Vert x-c\right\Vert <\delta$, then $\sum_{k=1}%
^{n+1}\left\vert \xi_{k}-\lambda_{k}\right\vert <\varepsilon$. For such
$\mathbf{\xi}$ and $x$ we have $\left\vert \xi_{n+1}-\lambda_{n+1}\right\vert
<\frac{1}{2}\min\left\{  \lambda_{n+1},1-\lambda_{n+1}\right\}  $,
so$\ \xi_{n+1}>\frac{1}{2}\lambda_{n+1}>0$ and%
\[
\xi_{n+1}<\lambda_{n+1}+\frac{1}{2}\left(  1-\lambda_{n+1}\right)  =\frac
{1}{2}(1+\lambda_{n+1})<1.
\]
Hence%
\[
\left\vert 1-\xi_{n+1}\right\vert =1-\xi_{n+1}>1-\frac{1}{2}(1+\lambda
_{n+1})=\frac{1}{2}(1-\lambda_{n+1})\text{.}%
\]
Since also $\sum_{k=1}^{n}\frac{\xi_{k}}{1-\xi_{n+1}}=1$, we see that
$\sum_{k=1}^{n}\frac{\xi_{k}}{1-\xi_{n+1}}a_{k}\in\mathsf{aff}(F)$. On the
other hand, by Lemma \ref{aug2A},%
\begin{align*}
\left\Vert \sum_{k=1}^{n}\frac{\xi_{k}}{1-\xi_{n+1}}a_{k}-b\right\Vert  &
=\left\Vert \sum_{k=1}^{n}\left(  \frac{\xi_{k}}{1-\xi_{n+1}}-\frac
{\lambda_{k}}{1-\lambda_{n+1}}\right)  a_{k}\right\Vert \\
&  \leq\sum_{k=1}^{n}\left\vert \frac{\xi_{k}}{1-\xi_{n+1}}-\frac{\lambda_{k}%
}{1-\lambda_{n+1}}\right\vert \left\Vert a_{k}\right\Vert \\
&  \leq M\sum_{k=1}^{n}\left\vert \frac{\xi_{k}}{1-\xi_{n+1}}-\frac
{\lambda_{k}}{1-\lambda_{n+1}}\right\vert \\
&  \leq\frac{2M}{\frac{1}{2}\left(  1-\lambda_{n+1}\right)  ^{2}}\sum
_{k=1}^{n+1}\left\vert \xi_{k}-\lambda_{k}\right\vert \\
&  <\frac{4M\varepsilon}{\left(  1-\lambda_{n+1}\right)  ^{2}}<r,
\end{align*}
so (\ref{La4}) holds.%
\hfill
$\square$
\end{proof}

\section{Relative interior points}

If $X$ is a real normed linear space, then the \emph{relative interior},
$\mathsf{relint}(S)$, of $S\subset X$ is the interior of $S$ relative to
$\mathsf{aff}(S)$: thus%
\[
\mathsf{relint}(S)=\left\{  x\in S:\exists r>0\left(  B_{X}(x,r)\cap
\mathsf{aff}(S)\subset S\right)  \right\}  .
\]
Points of $\mathsf{relint}(S)$ are called \emph{relative interior points of
}$S$ \emph{in }$X$.

\begin{corollary}
\label{aug01c1}Let $\Sigma$ be an $n$-simplex with vertices $a_{1}%
,\ldots,a_{n+1}\ $in a real normed linear space $X$, let $F=\mathsf{co}%
\left\{  a_{1},\ldots,a_{n}\right\}  $, and let $c=\sum_{k=1}^{n+1}\lambda
_{k}a_{k}$, where $\mathbf{\lambda}\in\Delta_{n}$ and $0<\lambda_{n+1}<1$. If%
\[
b\equiv\sum_{k=1}^{n}\frac{\lambda_{k}}{1-\lambda_{n+1}}a_{k}%
\]
is a relative interior point of $F$, then $c$ is a relative interior point of
$\Sigma$.
\end{corollary}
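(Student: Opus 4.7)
The plan is to chase through the definitions using Proposition \ref{jul31l1} as the main workhorse. Since $b$ is a relative interior point of $F$, there exists $r>0$ with $B_X(b,r)\cap\mathsf{aff}(F)\subset F$. I would feed this $r$ into Proposition \ref{jul31l1} to obtain a $\delta>0$ with the stated property, and then claim that this same $\delta$ witnesses $c\in\mathsf{relint}(\Sigma)$; that is, I would show that $B_X(c,\delta)\cap\mathsf{aff}(\Sigma)\subset\Sigma$.

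To verify this inclusion, take an arbitrary $x\in B_X(c,\delta)\cap\mathsf{aff}(\Sigma)$. Corollary \ref{jul24c1A} supplies unique $\mathbf{\xi}\in\mathbf{R}^{n+1}$ with $\sum_{k=1}^{n+1}\xi_{k}=1$ and $x=\sum_{k=1}^{n+1}\xi_{k}a_{k}$; it remains to show $\xi_{k}\geq 0$ for all $k$, i.e., $\mathbf{\xi}\in\Delta_{n}$. Proposition \ref{jul31l1} immediately gives $0<\xi_{n+1}<1$ and
\[
b'\equiv\sum_{k=1}^{n}\frac{\xi_{k}}{1-\xi_{n+1}}a_{k}\in B_{X}(b,r)\cap\mathsf{aff}(F).
\]
By choice of $r$, then, $b'\in F=\mathsf{co}\{a_{1},\ldots,a_{n}\}$.

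The one subtle step, and I expect the main obstacle, is converting ``$b'\in F$'' into the nonnegativity of the individual coefficients $\xi_{k}/(1-\xi_{n+1})$. Since $a_{1},\ldots,a_{n}$ are themselves affinely independent (they are vertices of $\Sigma$), membership of $b'$ in $F$ supplies \emph{some} convex representation $b'=\sum_{k=1}^{n}\beta_{k}a_{k}$ with $\beta_{k}\geq 0$ and $\sum\beta_{k}=1$. Applying the uniqueness clause of Corollary \ref{jul24c1A} inside $\mathsf{aff}\{a_{1},\ldots,a_{n}\}$, one identifies $\beta_{k}=\xi_{k}/(1-\xi_{n+1})$, and hence $\xi_{k}\geq 0$ for $1\leq k\leq n$ because $1-\xi_{n+1}>0$. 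Combined with $\xi_{n+1}>0$, this yields $\mathbf{\xi}\in\Delta_{n}$, so $x\in\Sigma$, completing the proof. (Membership $c\in\Sigma$ itself is immediate from $\mathbf{\lambda}\in\Delta_{n}$.)
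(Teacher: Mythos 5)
Your proof is correct and follows essentially the same route as the paper: both take the $r$ witnessing $b\in\mathsf{relint}(F)$, feed it into Proposition \ref{jul31l1} to get $\delta$, and show $B_X(c,\delta)\cap\mathsf{aff}(\Sigma)\subset\Sigma$. The only difference is your final step: the paper avoids the uniqueness argument entirely by writing $x=(1-\xi_{n+1})\,b'+\xi_{n+1}a_{n+1}$ and observing that this is a convex combination of two points of the convex set $\Sigma$ (since $b'\in F\subset\Sigma$ and $0<\xi_{n+1}<1$), which is a little cleaner than extracting nonnegativity of each $\xi_k$ via Corollary \ref{jul24c1A}.
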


\begin{proof}
If $r>0$ and $B_{X}(b,r)\cap\mathsf{aff}(F)\subset F$, construct $\delta$ as
in Proposition \ref{jul31l1}. Let $\mathbf{\xi}\in\mathbf{R}^{n+1}$ and%
\[
x\equiv\sum_{k=1}^{n+1}\xi_{k}a_{k}\in B_{X}(c,\delta)\cap\mathsf{aff}%
(\Sigma).
\]
By our choice of $\delta$, $0<\xi_{n+1}<1$ and%
\[
\sum_{k=1}^{n}\frac{\xi_{k}}{1-\xi_{n+1}}a_{k}\in B_{X}(b,r)\cap
\mathsf{aff}(F)\subset F\subset\Sigma.
\]
Thus%
\[
x=\left(  1-\xi_{n+1}\right)  \sum_{k=1}^{n}\frac{\xi_{k}}{1-\xi_{n+1}}%
a_{k}+\xi_{n+1}a_{n+1},
\]
which is a convex combination of two elements of $\Sigma$ and therefore
belongs to $\Sigma$. It follows that $B_{X}(c,\delta)\cap\mathsf{aff}%
(\Sigma)\subset\Sigma$ and hence that $c$ is a relative interior point of
$\Sigma$.%
\hfill
$\square$
\end{proof}

\begin{proposition}
\label{sept09p1}Let $\Sigma$ be an $n$-simplex in an $n$-dimensional real
normed linear space $X$. Then $\mathsf{relint}(\Sigma)\ $is the interior of
$\Sigma$.
\end{proposition}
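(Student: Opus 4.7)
The plan is to reduce the relative interior condition to the ordinary interior condition by proving that, under the stated hypotheses, $\mathsf{aff}(\Sigma)$ coincides with all of $X$. Once this is established, the ball intersection in the definition of $\mathsf{relint}(\Sigma)$ degenerates to the ball itself, and the two notions agree trivially.

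First, I would fix vertices $a_{1},\ldots,a_{n+1}$ of $\Sigma$ and look at the $n$ vectors $a_{k}-a_{n+1}$ for $1\leq k\leq n$. By the definition of affine independence preceding Proposition \ref{jul23p1}, these are linearly independent in $X$. Since $X$ is $n$-dimensional, any $n$ linearly independent vectors form a basis (this is the standard constructive fact used via \cite[4.1.4, 4.1.8]{BVtech}, which the paper has already invoked in Proposition \ref{may29p1} and Lemma \ref{aug04l1A}). Hence every $x\in X$ admits a unique expression $x-a_{n+1}=\sum_{k=1}^{n}c_{k}(a_{k}-a_{n+1})$, which rearranges to
\[
x=\sum_{k=1}^{n}c_{k}a_{k}+\Bigl(1-\sum_{k=1}^{n}c_{k}\Bigr)a_{n+1},
\]
an affine combination of the vertices. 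Thus $X\subset\mathsf{aff}\{a_{1},\ldots,a_{n+1}\}=\mathsf{aff}(\Sigma)$, and the reverse inclusion is immediate.

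Second, with $\mathsf{aff}(\Sigma)=X$ in hand, the defining condition ``$B_{X}(x,r)\cap\mathsf{aff}(\Sigma)\subset\Sigma$'' for $x\in\mathsf{relint}(\Sigma)$ becomes simply ``$B_{X}(x,r)\subset\Sigma$'', which is exactly the condition for $x$ to lie in the interior of $\Sigma$. Therefore $\mathsf{relint}(\Sigma)$ equals the interior of $\Sigma$.

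I do not anticipate a substantive obstacle: the entire content is the observation that affine independence plus matching dimension forces $\mathsf{aff}(\Sigma)=X$. The only point requiring constructive care is the linear-algebra fact that $n$ linearly independent vectors span an $n$-dimensional normed space, but this is standard in \textbf{BISH} under the appropriate definition of dimension and is already used elsewhere in the paper.
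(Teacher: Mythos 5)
Your proposal is correct and takes essentially the same route as the paper, which simply notes that one may assume $X=\mathbf{R}^{n}$ so that $\mathsf{aff}(\Sigma)=\mathbf{R}^{n}$ and the ball intersection degenerates; you merely spell out the justification that $n$ affinely independent vertices force $\mathsf{aff}(\Sigma)=X$, a detail the paper leaves implicit.
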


\begin{proof}
We may assume that $X=\mathbf{R}^{n}$, so that $\mathsf{aff}(\Sigma
)=\mathbf{R}^{n}$. Then $c$ is a relative interior point of $\Sigma$ if and
only if there exists $r>0$ such that%
\[
B_{\mathbf{R}^{n}}(c,r)=B_{\mathbf{R}^{n}}(c,r)\cap\mathbf{R}^{n}%
=B_{\mathbf{R}^{n}}(c,r)\cap\mathsf{aff}(\Sigma)\subset\Sigma
\]
---that is, $c$ belongs to the interior of $\Sigma$.%
\hfill
$\square$
\end{proof}

We now provide alternative characterisations of relative interior points of a simplex.

\begin{proposition}
\label{jul21p3}Let $\Sigma$ be an $n$-simplex with vertices $a_{1}%
,\ldots,a_{n+1}$ in a real normed linear space $X$, let $\mathbf{\lambda}%
\in\Delta_{n}$, and let $c=\sum_{k=1}^{n}\lambda_{k}a_{k}\in\Sigma$. Then the
following conditions are equivalent

\begin{enumerate}
\item[\emph{(i)}] $c$ is a relative interior point of $\Sigma$ in $X.$

\item[\emph{(ii)}] $c$ is bounded away from each $(n-1)$-face of $\Sigma$.

\item[\emph{(iii)}] $c=\sum_{k=1}^{n+1}\lambda_{k}a_{k}$, where
$\mathbf{\lambda}\in\Delta_{n}^{+}$.
\end{enumerate}
\end{proposition}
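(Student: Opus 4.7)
The plan is to prove (ii)$\Leftrightarrow$(iii) as a direct consequence of Corollary~\ref{jul23c1}, and then close the cycle by showing (iii)$\Rightarrow$(i) inductively via Corollary~\ref{aug01c1} and (i)$\Rightarrow$(iii) by a perturbation in $\mathsf{aff}(\Sigma)$. Throughout I read $c=\sum_{k=1}^{n+1}\lambda_{k}a_{k}$, as required by $\mathbf{\lambda}\in\Delta_n\subset\mathbf{R}^{n+1}$.

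For (ii)$\Leftrightarrow$(iii): the $(n-1)$-faces of $\Sigma$ are precisely the $n+1$ faces $F_\nu$ opposite $a_\nu$, and Corollary~\ref{jul23c1} says $\rho(c,F_\nu)>0$ iff $\lambda_\nu>0$. So $c$ is bounded away from every $(n-1)$-face iff every $\lambda_\nu>0$, i.e.\ iff $\mathbf{\lambda}\in\Delta_n^+$.

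For (iii)$\Rightarrow$(i), I would induct on $n$. The case $n=0$ is vacuous. For the inductive step, given $\mathbf{\lambda}\in\Delta_n^+$ we have $0<\lambda_{n+1}<1$, since $1-\lambda_{n+1}=\sum_{k=1}^n\lambda_k>0$. The rescaled vector with entries $\lambda_k/(1-\lambda_{n+1})$ lies in $\Delta_{n-1}^+$, so by the induction hypothesis applied to the face $F=\mathsf{co}\{a_1,\ldots,a_n\}$, the point $b\equiv\sum_{k=1}^n\frac{\lambda_k}{1-\lambda_{n+1}}a_k$ is a relative interior point of $F$; Corollary~\ref{aug01c1} promotes this to relative interiority of $c$ in $\Sigma$.

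For (i)$\Rightarrow$(iii), assume $B_X(c,r)\cap\mathsf{aff}(\Sigma)\subset\Sigma$. Fix $k$, pick any $j\neq k$ (taking $n\geq 1$; the case $n=0$ is trivial), and note $\|a_j-a_k\|>0$ by affine independence. With $\varepsilon=r/(2\|a_j-a_k\|)>0$ and $x\equiv c+\varepsilon(a_j-a_k)$, the coefficients $\mu_j=\lambda_j+\varepsilon$, $\mu_k=\lambda_k-\varepsilon$, and $\mu_i=\lambda_i$ otherwise sum to $1$ and satisfy $x=\sum_i\mu_i a_i$; since $\|x-c\|=r/2<r$, we get $x\in\mathsf{aff}(\Sigma)\cap B_X(c,r)\subset\Sigma$. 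Uniqueness of barycentric coordinates (Corollary~\ref{jul24c1A}) forces $\mu_k\geq 0$, hence $\lambda_k\geq\varepsilon>0$. The main constructive subtlety lies exactly here: the conclusion requires a strictly positive lower bound on each $\lambda_k$, not merely the negation of $\lambda_k=0$, and the perturbation supplies such a bound through the explicit positive $\varepsilon$ manufactured from $r$ and the positive edge length.
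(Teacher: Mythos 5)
Your proof is correct, and two of its three legs --- the equivalence (ii)$\Leftrightarrow$(iii) via Corollary~\ref{jul23c1}, and (iii)$\Rightarrow$(i) by induction through the face $F=\mathsf{co}\{a_1,\ldots,a_n\}$ --- coincide with the paper's argument (the paper inlines the content of Corollary~\ref{aug01c1} by invoking Proposition~\ref{jul31l1} directly and works out the base case $n=1$ by hand, whereas you start the induction at the trivial case $n=0$ and cite Corollary~\ref{aug01c1}; these are cosmetic differences). Where you genuinely diverge is in closing the cycle from (i). The paper proves (i)$\Rightarrow$(ii): given $z\in F_\nu$ close to $c$, it pushes past the face by forming $y=(1-\theta)a_\nu+\theta z$ with $\theta>1$, lands $y$ in $B_X(c,r)\cap\mathsf{aff}(\Sigma)\subset\Sigma$, and derives a contradiction from the negative barycentric coordinate $1-\theta$, concluding $\rho(c,F_\nu)\geq r/2$. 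You instead prove (i)$\Rightarrow$(iii) directly: perturb $c$ to $c+\varepsilon(a_j-a_k)$ with $\varepsilon=r/\bigl(2\left\Vert a_j-a_k\right\Vert\bigr)$, note the perturbed point stays in $\Sigma$, and read off $\lambda_k\geq\varepsilon$ from nonnegativity and uniqueness of barycentric coordinates (Corollary~\ref{jul24c1A}; note $\left\Vert a_j-a_k\right\Vert>0$ by affine independence). Your route is arguably the more economical one here: it produces an explicit positive lower bound $\lambda_k\geq r/\bigl(2\left\Vert a_j-a_k\right\Vert\bigr)$ in one step, which is precisely the constructive content (iii) demands, and it reaches (ii) only through the already-established equivalence with (iii). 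The paper's route has the mild advantage of yielding the quantitative statement $\rho(c,F_\nu)\geq r/2$ for (ii) without passing through the coordinates, but both arguments are fully constructive and of comparable length.
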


\begin{proof}
For each $k\leq n+1$ let $F_{k}$ be the face of $\Sigma$ opposite $a_{k}$, and
note that by Corollary \ref{jul23c1}, $\rho(a_{k},F_{k})>0$. If (i) holds,
then there exists $r>0$ such that $B_{X}(c,r)\cap\mathsf{aff}(\Sigma
)\subset\Sigma$. Let $\nu\leq n+1$ and $z\in F_{\nu}$, and assume $\left\Vert
z-c\right\Vert <r/2$. There exists $\mathbf{\zeta}\in\Delta_{n} $ with
$\zeta_{\nu}=0$, $\sum_{k=1}^{n+1}\zeta_{k}=1$, and $z=\sum_{k=1}^{n+1}%
\zeta_{k}a_{k}$. Choose $\theta>1$ such that $\left(  \theta-1\right)
\left\Vert a_{\nu}-c\right\Vert <r/2$ and $\theta\left\Vert z-c\right\Vert
<r/2$, and let $y=\left(  1-\theta\right)  a_{\nu}+\theta z\in\mathsf{aff}%
(\Sigma)$. Then%
\[
\left\Vert y-c\right\Vert \leq\left(  \theta-1\right)  \left\Vert a_{\nu
}-c\right\Vert +\theta\left\Vert z-c\right\Vert <r,
\]
so $y\in B_{X}(c,r)\cap\mathsf{aff}(\Sigma)$ and therefore $y\in\Sigma$. Hence
there exists $\mathbf{\eta}\in\Delta_{n}$ such that $y=\sum_{k=1}^{n+1}%
\eta_{k}a_{k}$. But $y=\left(  1-\theta\right)  a_{n+1}+\sum_{k=1}^{n}%
\theta\zeta_{k}a_{k}$,\ where $\left(  1-\theta\right)  +\sum_{k=1}^{n}%
\theta\zeta_{k}=1$, so by Corollary \ref{jul24c1A}, $\eta_{n+1}=1-\theta<0$, a
contradiction from which it follows that $\left\Vert z-c\right\Vert \geq r/2$.
Since $z\in F_{\nu}$ is arbitrary, we conclude that $\rho(c,F_{\nu})\geq r/2$.
Hence (i) implies (ii). \label{000022}The equivalence of (ii) and (iii) is a
simple consequence of Corollary \ref{jul23c1}.

Finally, assuming (iii), we prove by induction on $n$ that (i) holds. If $n=1
$, then there exists $\lambda$ such that $0<\lambda<1$ and $c=\lambda
a_{1}+(1-\lambda)a_{2}$. Let%
\[
r=\frac{1}{2}\left\Vert a_{1}-a_{2}\right\Vert \min\left\{  \lambda
,1-\lambda\right\}  >0.
\]
If%
\[
x=\xi a_{1}+\left(  1-\xi\right)  a_{2}\in B_{X}(c,r)\cap\mathsf{aff}%
(\Sigma),
\]
then%
\begin{align*}
\left\vert \xi-\lambda\right\vert  &  =\frac{1}{\left\Vert a_{1}%
-a_{2}\right\Vert }\left\vert \xi-\lambda\right\vert \left\Vert a_{1}%
-a_{2}\right\Vert \\
&  =\frac{1}{\left\Vert a_{1}-a_{2}\right\Vert }\left\Vert x-c\right\Vert
<\frac{1}{2}\min\left\{  \lambda,1-\lambda\right\}  ,
\end{align*}
so $0<\xi<1$ and therefore $x\in\Sigma$. This proves the case $n=1$. Now let
$n>1$ and suppose that we have proved that if $v_{1},\ldots,v_{n}$ are
affinely independent vectors in $X$ and $\mathsf{\alpha}\in\Delta_{n-1}^{+}$,
then $\sum_{k=1}^{n}\alpha_{k}v_{k}$ is a relative interior point of the
$(n-1)$-simplex with vertices $v_{1},\ldots,v_{n}$. Consider the case where
$\Sigma$ is an $n$-simplex. By Corollary \ref{jul29c1}, $c\neq a_{n+1}$. Let
\[
b\equiv\sum_{k=1}^{n}\frac{\lambda_{k}}{1-\lambda_{n+1}}a_{k}%
\]
By our induction hypothesis, $b$ is a relative interior point of the
$(n-1)$-simplex $F_{n+1}$, so there exists $r>0$ such that $B_{X}%
(b,r)\cap\mathsf{aff(}F_{n+1})\subset F_{n+1}$. By Proposition \ref{jul31l1},
there exists $\delta>0$ such that if $\mathbf{\eta\in R}^{n+1}$, $\sum
_{k=1}^{n+1}\eta_{k}=1$, $y=\sum_{k=1}^{n+1}\eta_{k}a_{k}$, and $\left\Vert
y-c\right\Vert <\delta$, then $0<\eta_{n+1}<1$ and%
\[
\sum_{k=1}^{n}\frac{\eta_{k}}{1-\eta_{n+1}}a_{k}\in B_{X}(b,r)\cap
\Sigma\subset F_{n+1}.
\]
In that case,%
\[
y=\eta_{n+1}a_{n+1}+\left(  1-\eta_{n+1}\right)  \sum_{k=1}^{n}\frac{\eta_{k}%
}{1-\eta_{n+1}}a_{k},
\]
which belongs to $\Sigma$. Thus $B_{X}(c,\delta)\cap\mathsf{aff}%
(\Sigma)\subset\Sigma$. This completes the proof that (iii) implies (i).%
\hfill
$\square$
\end{proof}

\begin{corollary}
\label{jul24c1}The barycentre of a simplex in a real normed linear space is a
relative interior point.
\end{corollary}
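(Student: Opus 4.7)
The plan is to invoke Proposition \ref{jul21p3} directly. Let $\Sigma$ be an $n$-simplex with vertices $a_{1},\ldots,a_{n+1}$ in a real normed linear space $X$. By definition, the barycentre of $\Sigma$ is the point
\[
b=\sum_{k=1}^{n+1}\frac{1}{n+1}a_{k},
\]
so its barycentric coordinate vector is $\mathbf{\lambda}=\left(\tfrac{1}{n+1},\ldots,\tfrac{1}{n+1}\right)$.

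First I would verify that $\mathbf{\lambda}\in\Delta_{n}^{+}$: each coordinate $\lambda_{k}=\tfrac{1}{n+1}$ strictly exceeds $0$, and $\sum_{k=1}^{n+1}\lambda_{k}=1$. Thus condition (iii) of Proposition \ref{jul21p3} is satisfied.

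Applying the implication (iii)$\Rightarrow$(i) of Proposition \ref{jul21p3}, we conclude that $b$ is a relative interior point of $\Sigma$. Since every step is a direct invocation of a previously established equivalence, there is no real obstacle here; the content of the corollary is entirely absorbed by Proposition \ref{jul21p3}.
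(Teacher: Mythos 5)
Your proof is correct and is exactly the intended argument: the paper states this corollary without proof immediately after Proposition \ref{jul21p3}, relying on the observation that the barycentre's coordinate vector $\left(\tfrac{1}{n+1},\ldots,\tfrac{1}{n+1}\right)$ lies in $\Delta_{n}^{+}$, so (iii)$\Rightarrow$(i) applies. Nothing further is needed.
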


\begin{corollary}
\label{sept09c1}Let $\Sigma$ be an $n$-simplex with vertices $a_{1}%
,\ldots,a_{n+1}$ in $\mathbf{R}^{n}$. Then $\Sigma^{\circ}$ is inhabited.
\end{corollary}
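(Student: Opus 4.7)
The plan is to combine the immediately preceding results: Corollary \ref{jul24c1} identifies a concrete relative interior point (the barycentre), and Proposition \ref{sept09p1} says that in the ambient-dimensional case the relative interior coincides with the interior. So the entire corollary reduces to producing one witness.

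First I would form the barycentre $b \equiv \sum_{k=1}^{n+1}\frac{1}{n+1}a_k$. Its barycentric coordinate vector $\mathbf{\lambda} = \left(\frac{1}{n+1},\ldots,\frac{1}{n+1}\right)$ lies in $\Delta_n^+$, so Corollary \ref{jul24c1} (or, equivalently, the implication (iii)$\Rightarrow$(i) of Proposition \ref{jul21p3}) gives $b \in \mathsf{relint}(\Sigma)$. Next, since $\Sigma$ sits in the $n$-dimensional normed space $\mathbf{R}^n$, Proposition \ref{sept09p1} applies and tells us that $\mathsf{relint}(\Sigma) = \Sigma^\circ$. Therefore $b \in \Sigma^\circ$, and in particular $\Sigma^\circ$ is inhabited.

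There is essentially no obstacle here: the work has already been done in Proposition \ref{jul21p3} (constructing an explicit $\delta > 0$ for each relative interior point via the chain through Proposition \ref{jul31l1} and Lemma \ref{aug04l1A}) and in Proposition \ref{sept09p1} (where $\mathsf{aff}(\Sigma) = \mathbf{R}^n$ because affine independence of $n+1$ points in $\mathbf{R}^n$ forces their affine hull to be the whole space). The only constructive content to double-check is that the barycentric coordinates $\frac{1}{n+1}$ are strictly positive, which is trivial, so that $\mathbf{\lambda} \in \Delta_n^+$ holds with a uniform positive lower bound on each coordinate. The proof will therefore be only two or three sentences long, assembling the two cited results.
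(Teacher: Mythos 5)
Your proof is correct and is essentially identical to the paper's: the author likewise takes the barycentre $\frac{1}{n+1}\sum_{k=1}^{n+1}a_{k}$, applies Proposition \ref{jul21p3} to place it in $\mathsf{relint}(\Sigma)$, and invokes Proposition \ref{sept09p1} to identify $\mathsf{relint}(\Sigma)$ with $\Sigma^{\circ}$. No gaps.
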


\begin{proof}
In view of Propositions \ref{jul21p3} and \ref{sept09p1}, $\frac{1}{n+1}%
\sum_{k=1}^{n+1}a_{k}\in\mathsf{relint}(\Sigma)=\Sigma^{\circ}$.%
\hfill
$\square$
\end{proof}

\begin{corollary}
\label{sept10c1}Under the hypotheses of \emph{Proposition \ref{jul31l1}, }$b$
is a relative interior point of $F$ if and only if $c$ is a relative interior
point of $\Sigma$.
\end{corollary}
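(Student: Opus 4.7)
The plan is to prove the two implications separately, using results already established.

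For the forward direction ($b \in \mathsf{relint}(F) \Rightarrow c \in \mathsf{relint}(\Sigma)$), I would simply invoke Corollary \ref{aug01c1}, which is exactly this statement. So there is nothing new to do here.

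For the reverse direction, I would use the equivalence of (i) and (iii) in Proposition \ref{jul21p3}. Assuming $c$ is a relative interior point of $\Sigma$, Proposition \ref{jul21p3} gives $\mathbf{\lambda} \in \Delta_n^+$, so in particular $\lambda_k > 0$ for each $k$ with $1 \leq k \leq n$. Since $0 < \lambda_{n+1} < 1$ by hypothesis, the coefficients $\frac{\lambda_k}{1-\lambda_{n+1}}$ appearing in the expression for $b$ are all strictly positive for $k = 1,\ldots,n$, and they sum to $1$ by the computation already carried out in Proposition \ref{jul31l1}. Thus $b$ is expressed as a convex combination of the vertices $a_1,\ldots,a_n$ of $F$ with strictly positive barycentric coordinates. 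Applying Proposition \ref{jul21p3} once more — this time to the $(n-1)$-simplex $F$ with vertices $a_1,\ldots,a_n$ — the implication (iii) $\Rightarrow$ (i) yields that $b$ is a relative interior point of $F$.

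There is no real obstacle here: both directions reduce to previously proved results. The only modest subtlety is bookkeeping, namely recognising that the barycentric coordinates of $b$ with respect to the vertices of $F$ are exactly $\frac{\lambda_k}{1-\lambda_{n+1}}$ $(1\leq k\leq n)$ and that their positivity is equivalent (given $\lambda_{n+1}<1$) to the positivity of the first $n$ coordinates of $\mathbf{\lambda}$, so that $\mathbf{\lambda}\in\Delta_n^+$ translates into the analogous positivity condition for $b$.
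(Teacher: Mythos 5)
Your proposal is correct and is essentially the paper's own argument: the forward direction is Corollary \ref{aug01c1}, and the reverse direction combines the two implications (i)$\Rightarrow$(iii) and (iii)$\Rightarrow$(i) of Proposition \ref{jul21p3}, applied to $\Sigma$ and to the $(n-1)$-simplex $F$ respectively, via the observation that the barycentric coordinates $\frac{\lambda_k}{1-\lambda_{n+1}}$ of $b$ are positive exactly when $\lambda_1,\ldots,\lambda_n$ are. No gaps.
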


\begin{proof}
In view of Corollary \ref{aug01c1} and Proposition \ref{jul21p3}, it suffices
to note that if $c$ is a relative interior point of $\Sigma$, then
$\frac{\lambda_{k}}{1-\lambda_{n+1}}>0$ for each $k\leq n$.%
\hfill
$\square$
\end{proof}

\section{Vertex perturbations}

The next three results will enable us to prove that simplex structure is
preserved under small perturbations of the vertices.

\begin{lemma}
\label{apr28l2}Let $x_{1},\ldots,x_{n}$ be linearly independent vectors in a
real normed linear space $X$. There exists $\delta>0$ such that if $y_{k}\in
X$ and $\left\Vert x_{k}-y_{k}\right\Vert <\delta$ for each $k$, then the
vectors $y_{1},\ldots,y_{n}$ are linearly independent.
\end{lemma}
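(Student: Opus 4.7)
The plan is to reduce the problem to a quantitative lower bound on linear combinations of the $x_k$'s, then perturb by an amount that is a fixed fraction of this bound. The key constructive ingredient is the same result from \cite[4.1.8]{BVtech} that was invoked in the proof of Lemma \ref{aug04l1A}: if $x_1,\ldots,x_n$ are linearly independent in $X$, then the mapping $u:\mathbf{R}^n\to X$ defined by $u(\mathbf{\lambda})=\sum_{k=1}^n\lambda_k x_k$ is a bounded linear injection onto its range whose inverse is also bounded. Equivalently, there exists a constant $c>0$ such that
\[
\left\Vert \sum_{k=1}^n\lambda_k x_k\right\Vert \geq c\sum_{k=1}^n\left\vert \lambda_k\right\vert
\qquad(\mathbf{\lambda}\in\mathbf{R}^n).
\]

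With this in hand I would set $\delta=c/2$. Suppose $y_1,\ldots,y_n\in X$ satisfy $\Vert x_k-y_k\Vert<\delta$ for each $k$, and let $\lambda_1,\ldots,\lambda_n$ be scalars with $\sum_{k=1}^n\vert\lambda_k\vert>0$. By the triangle inequality,
\[
\left\Vert \sum_{k=1}^n\lambda_k y_k\right\Vert \geq \left\Vert \sum_{k=1}^n\lambda_k x_k\right\Vert -\sum_{k=1}^n\vert\lambda_k\vert\,\Vert x_k-y_k\Vert \geq c\sum_{k=1}^n\vert\lambda_k\vert-\delta\sum_{k=1}^n\vert\lambda_k\vert=\frac{c}{2}\sum_{k=1}^n\vert\lambda_k\vert>0.
\]
Hence $\sum_{k=1}^n\lambda_k y_k\neq 0$ in the constructive sense, which is precisely what is needed to conclude that $y_1,\ldots,y_n$ are linearly independent.

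The only real subtlety is constructive: classically it is enough that $u$ is injective, but in \textbf{BISH} we must extract a uniform positive constant $c$ witnessing the boundedness of $u^{-1}$ so that the perturbation estimate $\Vert\sum\lambda_k y_k\Vert\geq (c-\delta)\sum|\lambda_k|$ yields a strictly positive lower bound. This is exactly what \cite[4.1.8]{BVtech} supplies, and once $c$ is available the rest of the argument is a routine triangle-inequality manipulation with $\delta=c/2$. Thus I expect no further obstacle beyond citing this quantitative form of the boundedness of $u^{-1}$.
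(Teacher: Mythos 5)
Your proposal is correct and follows essentially the same route as the paper: both extract a constant $c>0$ with $\left\Vert \sum_{k=1}^{n}\lambda_{k}x_{k}\right\Vert \geq c\sum_{k=1}^{n}\left\vert \lambda_{k}\right\vert$ (the paper via equivalence of norms on the finite-dimensional span, you via the bounded inverse from \cite[4.1.8]{BVtech} --- the same underlying fact) and then apply the triangle inequality to the perturbed vectors. Your choice $\delta=c/2$ is just as valid as the paper's $\delta=c/(2n+1)$.
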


\begin{proof}
Let $V$ be the $n$-dimensional subspace of $X$ with basis $\left\{
x_{1},\ldots,x_{n}\right\}  $. Since all norms on $V$ are equivalent, there
exists $c>0$ such that\label{zz}%
\[
\left\Vert \sum_{k=1}^{n}\lambda_{k}x_{k}\right\Vert \geq c\sum_{k=1}%
^{n}\left\vert \lambda_{k}\right\vert
\]
for all $\mathbf{\lambda}\in$ $\mathbf{R}^{n}$. Let $\delta=c/(2n+1)$. For
$1\leq k\leq n+1$ let $y_{k}\in X\ $and $\left\Vert y_{k}-x_{k}\right\Vert
<\delta$. If $\lambda_{k}\in\mathbf{R}$ $(1\leq k\leq n)$ and $\sum_{k=1}%
^{n}\left\vert \lambda_{k}\right\vert >0$, then%
\begin{align*}
\left\Vert \sum_{k=1}^{n}\lambda_{k}y_{k}\right\Vert  &  \geq\left\Vert
\sum_{k=1}^{n}\lambda_{k}x_{k}\right\Vert -\left\Vert \sum_{k=1}^{n}%
\lambda_{k}(y_{k}-x_{k})\right\Vert \\
&  \geq c\sum_{k=1}^{n}\left\vert \lambda_{k}\right\vert -\sum_{k=1}%
^{n}\left\vert \lambda_{k}\right\vert \left\Vert (y_{k}-x_{k})\right\Vert \\
&  \geq\left(  2n+1\right)  \delta\sum_{k=1}^{n}\left\vert \lambda
_{k}\right\vert -\sum_{k=1}^{n}\left\vert \lambda_{k}\right\vert \delta\\
&  =2n\delta\sum_{k=1}^{n}\left\vert \lambda_{k}\right\vert >0\text{. }%
\end{align*}
Thus the vectors $y_{k}$ $(1\leq k\leq n)$ are linearly independent.%
\hfill
$\square$
\end{proof}

\begin{corollary}
\label{sept04c1}Let $a_{1},\ldots,a_{n+1}$ be affinely independent elements of
a real normed linear space $X$. Then there exists $\delta>0$ such that if
$x_{1},\ldots,x_{n+1}\ $belong to $X$ and $\left\Vert a_{k}-x_{k}\right\Vert
<\delta$ for each $k\leq n+1$, then the vectors $x_{1},\ldots,x_{n+1}$ are
affinely independent.
\end{corollary}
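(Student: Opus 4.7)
The plan is to reduce the corollary directly to Lemma \ref{apr28l2} via the definition of affine independence. Recall that $a_1,\ldots,a_{n+1}$ being affinely independent means precisely that the $n$ vectors $a_k - a_{n+1}$ ($1 \le k \le n$) are linearly independent in $X$. So the goal is to guarantee, for perturbations $x_k$, that the vectors $x_k - x_{n+1}$ ($1 \le k \le n$) are linearly independent.

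First, I would apply Lemma \ref{apr28l2} to the linearly independent family $a_k - a_{n+1}$ ($1 \le k \le n$) to obtain some $\delta' > 0$ with the property that whenever $y_k \in X$ and $\|y_k - (a_k - a_{n+1})\| < \delta'$ for each $k \le n$, the vectors $y_1,\ldots,y_n$ are linearly independent. Then I would set $\delta = \delta'/2$ and take any $x_1,\ldots,x_{n+1} \in X$ with $\|a_k - x_k\| < \delta$ for each $k \le n+1$.

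The key estimate is the triangle inequality
\[
\|(x_k - x_{n+1}) - (a_k - a_{n+1})\| \le \|x_k - a_k\| + \|x_{n+1} - a_{n+1}\| < 2\delta = \delta',
\]
so setting $y_k := x_k - x_{n+1}$ satisfies the hypothesis of Lemma \ref{apr28l2}. Therefore the vectors $x_k - x_{n+1}$ ($1 \le k \le n$) are linearly independent, which by definition means $x_1,\ldots,x_{n+1}$ are affinely independent.

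There is no real obstacle here: the corollary is a direct reduction to the previous lemma by noting that affine independence is linear independence of differences, and a uniform perturbation of the $x_k$ yields at most a doubled perturbation of those differences. The only minor point is remembering to absorb the perturbation of $x_{n+1}$ into every difference, which is handled by the factor of $2$ in choosing $\delta = \delta'/2$.
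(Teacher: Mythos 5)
Your proof is correct and is essentially identical to the paper's: both apply Lemma \ref{apr28l2} to the differences $a_k - a_{n+1}$ and use the triangle inequality to absorb the perturbation of $x_{n+1}$, with the factor of $2$ handled by a trivial rescaling of $\delta$. No issues.
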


\begin{proof}
By Lemma \ref{apr28l2}, there exists $\delta>0$ such that if $y_{k}\in X$ and
$\left\Vert \left(  a_{k}-a_{n+1}\right)  -y_{k}\right\Vert <2\delta$ for each
$k\leq n$, then $y_{1},\ldots,y_{n}$ are linearly independent. If $x_{k}\in X$
and$\ \left\Vert a_{k}-x_{k}\right\Vert <\delta$ for each $k\leq n+1$, then%
\[
\left\Vert \left(  a_{k}-a_{n+1}\right)  -\left(  x_{k}-x_{n+1}\right)
\right\Vert \leq\left\Vert a_{k}-x_{k}\right\Vert +\left\Vert a_{n+1}%
-x_{n+1}\right\Vert <2\delta\ \ \ (1\leq k\leq n),
\]
from which the result follows.%
\hfill
$\square$
\end{proof}

%

\medskip
We need some elementary facts about square matrices. We define two norms on
the linear space $M_{n\text{ }}$of all $n$-by-$n$ matrices over $\mathbf{R}$
as follows:%
\begin{align*}
\left\Vert A\right\Vert _{1} &  \equiv\max_{1\leq i,j\leq n}\left\vert
a_{ij}\right\vert ,\\
\left\Vert A\right\Vert _{0} &  \equiv\sup\left\{  \left\Vert A\mathbf{\lambda
}\right\Vert _{1}:\mathbf{\lambda\in R}^{n},\left\Vert \mathbf{\lambda
}\right\Vert _{1}\leq1\right\}  ,
\end{align*}
where $a_{ij}$ is the $\left(  i,j\right)  ^{\mathsf{th}}$ entry of the matrix
$A$, and $\left\Vert \mathbf{\lambda}\right\Vert _{1}=\sum_{k=1}^{n}\left\vert
\lambda_{k}\right\vert $. Since $M_{n}$ is finite-dimensional, there exists
$\kappa>0$ such that $\left\Vert A\right\Vert _{0}\leq\kappa\left\Vert
A\right\Vert _{1}$ for all $A\in M_{n}$.

\begin{lemma}
\label{sept12l1}For each $\varepsilon>0$ there exists $\delta>0$ such that if
$A\equiv\left[  \xi_{kj}\right]  $ is an $n$-by-$n$ matrix over $\mathbf{R}$
and $\left\Vert A-I\right\Vert _{1}<\delta$, then $\det A>1/2\,\ $and
$\left\Vert A^{-1}-I\right\Vert _{1}<\varepsilon$.
\end{lemma}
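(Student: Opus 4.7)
The plan is to handle the two conclusions independently and then take the minimum of the two $\delta$-values obtained. For the determinant bound, I note that $\det A$ is an explicit polynomial of degree $n$ in the $n^{2}$ entries of $A$, with $\det I = 1$. Since polynomials are uniformly continuous on bounded subsets of $\mathbf{R}^{n^{2}}$, there exists $\delta_{1} > 0$ such that $\|A - I\|_{1} < \delta_{1}$ implies $|\det A - 1| < 1/2$, and hence $\det A > 1/2$.

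For the estimate on $A^{-1} - I$, I invert $A = I + E$ (with $E \equiv A - I$) by means of the Neumann series. Provided $\|E\|_{0} \leq \kappa \|E\|_{1} < \kappa\delta < 1$, the series $\sum_{k=0}^{\infty}(-E)^{k}$ converges in operator norm to both a left and a right inverse of $A$, and thus equals $A^{-1}$; moreover,
\[
\|A^{-1} - I\|_{0} \;=\; \Bigl\|\sum_{k=1}^{\infty}(-E)^{k}\Bigr\|_{0} \;\leq\; \frac{\|E\|_{0}}{1 - \|E\|_{0}} \;\leq\; \frac{\kappa\delta}{1 - \kappa\delta}.
\]
To pass from the operator norm back to the entrywise norm, I observe that for any $B \in M_{n}$, applying $B$ to the standard basis vector $e_{j}$ yields $\|B e_{j}\|_{1} = \sum_{i}|b_{ij}|$, whence
\[
\|B\|_{0} \;\geq\; \max_{j}\|B e_{j}\|_{1} \;\geq\; \max_{i,j}|b_{ij}| \;=\; \|B\|_{1}.
\]
Consequently $\|A^{-1} - I\|_{1} \leq \|A^{-1} - I\|_{0} \leq \kappa\delta/(1-\kappa\delta)$, which is less than $\varepsilon$ as soon as $\kappa\delta < \varepsilon/(1+\varepsilon)$.

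To conclude, I choose $\delta > 0$ with $\delta < \delta_{1}$, $\kappa\delta < 1$, and $\kappa\delta < \varepsilon/(1+\varepsilon)$; this produces both desired inequalities simultaneously. I anticipate no serious constructive obstacle, since the Leibniz expansion of $\det$ and the Neumann series are both explicit formulas that behave identically in \textbf{BISH}; the only care needed is in the bookkeeping when combining the bounds, and in checking that the Neumann-series inverse agrees with $A^{-1}$ (which is immediate from multiplying the partial sums by $I + E$).
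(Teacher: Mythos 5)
Your proof is correct, and the half of it dealing with $A^{-1}$ takes a genuinely different route from the paper's. The paper argues softly: $\det$ is continuous at $I$, the adjugate map $A\rightsquigarrow\mathsf{adj}\,A$ is continuous everywhere, hence $A\rightsquigarrow A^{-1}=(\det A)^{-1}\mathsf{adj}\,A$ is continuous at $I$, and equivalence of norms on the finite-dimensional space $M_{n}$ finishes the job; no explicit $\delta$ is produced. You instead invert $A=I+E$ by the Neumann series, which bypasses the adjugate entirely and yields the explicit bound $\left\Vert A^{-1}-I\right\Vert _{0}\leq\kappa\delta/(1-\kappa\delta)$, hence a computable $\delta$ in terms of $\varepsilon$ and $\kappa$. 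Your bridge back from $\left\Vert \cdot\right\Vert _{0}$ to $\left\Vert \cdot\right\Vert _{1}$ via $\left\Vert B\right\Vert _{0}\geq\max_{j}\left\Vert Be_{j}\right\Vert _{1}\geq\left\Vert B\right\Vert _{1}$ is sound, as is the identification of the Neumann sum with $A^{-1}$ by multiplying partial sums by $I+E$. For the determinant you do essentially what the paper does (continuity of a polynomial at $I$). The trade-off: the paper's argument is shorter and reuses the norm-equivalence principle already invoked elsewhere, while yours is more quantitative and, being built from a geometric series with explicit modulus, is arguably the more transparently constructive of the two; both are perfectly valid in \textbf{BISH}.
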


\begin{proof}
The mapping $\det$ is continuous at $I$, and the mapping carrying $A$ to its
adjoint is continuous everywhere. Thus the mapping%
\[
A\rightsquigarrow A^{-1}=\frac{1}{\det A}\mathsf{adj}A
\]
is continuous at $I$. Since the set of all $n$-by-$n$ matrices over
$\mathbf{R}$ can be regarded as a linear subspace of $\mathbf{R}^{n^{2}}$, on
which all norms are equivalent, the result follows.%
\hfill
$\square$
\end{proof}

\begin{theorem}
\label{sept13t1}Let $X$ be a real normed linear space, $\Sigma$ an $n$-simplex
with vertices $a_{1},\ldots,a_{n+1}$ in $X$, and $c\ $a relative interior
point of $\Sigma$. There exists $\delta>0$ such that if $x_{k}\in
\mathsf{aff}(\Sigma)$ and $\left\Vert x_{k}-a_{k}\right\Vert <\delta$ for each
$k\leq n+1$, then the points $x_{1},\ldots,x_{n+1}$ are affinely independent,
and$\ c$ is a relative interior point of the $n$-simplex $\mathsf{co}\left\{
x_{1},\ldots,x_{n+1}\right\}  $.
\end{theorem}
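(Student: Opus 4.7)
The plan is to handle the two conclusions separately. Affine independence of $x_{1}, \ldots, x_{n+1}$ is immediate from Corollary \ref{sept04c1}, which supplies a threshold $\delta_{0} > 0$ such that any perturbation of each $a_{k}$ by less than $\delta_{0}$ keeps the vectors affinely independent. The substantive task is to shrink $\delta \leq \delta_{0}$ so that $c$ remains a relative interior point of the perturbed simplex $\Sigma^{\prime} \equiv \mathsf{co}\{x_{1}, \ldots, x_{n+1}\}$; by Proposition \ref{jul21p3}, this amounts to exhibiting $c$ as a convex combination $\sum_{k=1}^{n+1} \mu_{k} x_{k}$ with every $\mu_{k} > 0$.

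I would reduce this to a matrix perturbation problem. Since each $x_{k}$ lies in $\mathsf{aff}(\Sigma)$, write $x_{k} = \sum_{j=1}^{n+1} \alpha_{jk} a_{j}$ with $\sum_{j} \alpha_{jk} = 1$; by the uniform continuity of the barycentric-coordinate map $f_{n}^{-1}$ from Proposition \ref{may29p1}, the column $(\alpha_{1k}, \ldots, \alpha_{n+1, k})$ is close in $\|\cdot\|_{1}$ to $e_{k}$ whenever $\|x_{k} - a_{k}\|$ is small. Hence the $(n+1) \times (n+1)$ matrix $A = [\alpha_{jk}]$ can be forced to satisfy $\|A - I\|_{1} < \eta$ for any preassigned $\eta > 0$, once $\delta$ is chosen small enough. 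Writing $c = \sum_{j} \lambda_{j} a_{j}$ with $\mathbf{\lambda} \in \Delta_{n}^{+}$ (by Proposition \ref{jul21p3} applied to $c \in \mathsf{relint}(\Sigma)$), the equation $\sum_{k} \mu_{k} x_{k} = c$ unfolds---via uniqueness of barycentric coordinates (Corollary \ref{jul24c1A})---into the linear system $A\mathbf{\mu} = \mathbf{\lambda}$, and because each column of $A$ sums to $1$, any solution automatically satisfies $\sum_{k} \mu_{k} = 1$.

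Lemma \ref{sept12l1} (applied with $n+1$ in place of $n$) now yields, for $\|A - I\|_{1}$ sufficiently small, that $A$ is invertible and $\|A^{-1} - I\|_{1}$ is arbitrarily small. Setting $\mathbf{\mu} \equiv A^{-1}\mathbf{\lambda}$ makes $\|\mathbf{\mu} - \mathbf{\lambda}\|_{1}$ correspondingly small, and using $m \equiv \min_{k} \lambda_{k} > 0$ as a quantitative witness to the positivity of $\mathbf{\lambda}$, a sufficiently small $\delta$ forces $\mu_{k} > m/2 > 0$ for each $k$. Then $\mathbf{\mu} \in \Delta_{n}^{+}$, and the implication (iii) $\Rightarrow$ (i) of Proposition \ref{jul21p3} applied to $\Sigma^{\prime}$ completes the proof. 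The main obstacle will be the constructive chaining of the four thresholds involved (from Corollary \ref{sept04c1}, from the uniform continuity modulus of $f_{n}^{-1}$, from Lemma \ref{sept12l1}, and from the margin $m/2$); each is calculationally given, so the final $\delta$ is their minimum, but care is needed to ensure no step relies on an undecidable case distinction.
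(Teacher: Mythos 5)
Your proposal is correct and follows essentially the same route as the paper: reduce to the linear system relating the barycentric coordinates of $c$ with respect to the $a_{k}$ and the $x_{k}$, use the near-identity matrix of coordinates (via Lemma \ref{sept12l1}) to solve it, and bound the solution below by $m/2$ where $m=\min_{k}\lambda_{k}$, concluding with Proposition \ref{jul21p3}(iii). The only cosmetic differences are that the paper invokes Lemma \ref{aug04l1A} rather than Proposition \ref{may29p1} for the coordinate-perturbation estimate, and verifies $\sum_{k}\gamma_{k}=1$ by direct computation rather than via the column-sum observation.
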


\begin{proof}
By Corollary \ref{sept04c1}, there exists $\delta_{1}$ with $0<\delta_{1}<1/2$
such that if $x_{1},\ldots,x_{n+1}\ $belong to $X$ and $\left\Vert a_{k}%
-x_{k}\right\Vert <\delta_{1}$ for each $k\leq n+1$, then $x_{1}%
,\ldots,x_{n+1}$ are affinely independent. By Proposition \ref{jul21p3},
$c=\sum_{k=1}^{n+1}\lambda_{k}a_{k}$, where $\mathbf{\lambda}\in\Delta_{n}%
^{+}$. Let%
\begin{equation}
m=\min_{1\leq k\leq n+1}\lambda_{k}.\nonumber
\end{equation}
With $\kappa$ as in the sentence immediately before Lemma \ref{sept12l1},
choose $\delta_{2}$ with $0<\delta_{2}<\delta_{1}$ such that if $A$ is an
$n$-by-$n$ matrix over $\mathbf{R}$ and $\left\Vert A-I\right\Vert _{1}%
<\delta_{2}$, then $\det A>1/2$, $\left\Vert A^{-1}-I\right\Vert
_{1}<m/2\kappa$, and therefore $\left\Vert A^{-1}-I\right\Vert _{0}<m/2$.
Using Lemma \ref{aug04l1A}, construct $\delta$ with $0<\delta<\delta_{2}$ such
that if%
\[
\ \mathbf{\xi},\mathbf{\eta}\in\mathbf{R}^{n+1},\ \sum_{k=1}^{n+1}\xi
_{k}=1=\sum_{k=1}^{n+1}\eta_{k},\ x=\sum_{k=1}^{n+1}\xi_{k}a_{k}%
,\ y=\sum_{k=1}^{n+1}\eta_{k}a_{k},
\]
and $\left\Vert \sum_{k=1}^{n+1}\left(  \xi_{k}-\eta_{k}\right)
a_{k}\right\Vert $ then $\left\vert \xi_{k}-\eta_{k}\right\vert <\delta_{2}$
for each $k\leq n+1$. Consider points $x_{1},\ldots,x_{n+1}$ of $\mathsf{aff}%
(\Sigma)$ such that $\left\Vert x_{k}-a_{k}\right\Vert <\delta$ for each
$k\leq n+1$. Since $\delta<\delta_{1}$, those points $x_{k}$ are affinely
independent, so $\Sigma^{\prime}\equiv\mathsf{co}\left\{  x_{1},\ldots
,x_{n+1}\right\}  $ is an $n$-simplex. For each $k$ there exists a unique
$\mathbf{\xi}_{k}\equiv\left(  \xi_{k,1},\xi_{k,2},\ldots,\xi_{k,n+1}\right)
\in\mathbf{R}^{n+1}$ such that $\sum_{j=1}^{n+1}\xi_{k,j}=1$ and $x_{k}%
=\sum_{j=1}^{n+1}\xi_{k,j}a_{j}$. Then%
\begin{equation}
\left\vert \xi_{k,k}-1\right\vert <\delta_{2}\text{ and }\left\vert \xi
_{k,j}\right\vert <\delta_{2}\text{ for }j\neq k. \label{B0}%
\end{equation}
I claim that the equations%
\begin{equation}
\sum_{k=1}^{n+1}\gamma_{k}\xi_{k,j}=\lambda_{j}\ \ \ \ (1\leq j\leq n+1).
\label{B1}%
\end{equation}
\label{000001}have a unique solution $\mathbf{\gamma}$, that $\mathbf{\gamma
}\in\Delta_{n}^{+}$, and that $c=\sum_{j=1}^{n+1}\gamma_{j}x_{j}$. To justify
this claim, letting $\Xi$ be the $n$-by-$n$ matrix with $\left(  j,k\right)
^{\mathsf{th}}$ component $\xi_{j,k}$, we see that the equations can be
written as $\Xi^{T}\mathbf{\gamma}=\mathbf{\lambda}$, where $^{T}$ denotes
transpose. From (\ref{B0}) we see that $\left\Vert \Xi^{T}-I\right\Vert
_{1}<\delta_{2}$, so
\begin{equation}
\det\Xi^{T}>\frac{1}{2}\text{ and }\left\Vert \left(  \Xi^{T}\right)
^{-1}-I\right\Vert _{0}<\frac{m}{2}. \label{B-1}%
\end{equation}
The equations (\ref{B1}) therefore have a unique solution $\mathbf{\gamma
}=\left(  \Xi^{T}\right)  ^{-1}\mathbf{\lambda}$. Then%
\begin{align*}
c  &  =\sum_{j=1}^{n+1}\lambda_{j}a_{j}=\sum_{j=1}^{n+1}\sum_{k=1}^{n+1}%
\gamma_{k}\xi_{k,j}a_{j}\\
&  =\sum_{k=1}^{n+1}\sum_{j=1}^{n+1}\gamma_{k}\xi_{k,j}a_{j}=\sum_{k=1}%
^{n+1}\gamma_{k}\sum_{j=1}^{n+1}\xi_{k,j}a_{j}=\sum_{k=1}^{n+1}\gamma_{k}x_{k}%
\end{align*}
and%
\begin{align*}
\sum_{k=1}^{n+1}\gamma_{k}  &  =\sum_{k=1}^{n+1}\gamma_{k}1=\sum_{k=1}%
^{n+1}\gamma_{k}\sum_{j=1}^{n+1}\xi_{k,j}=\sum_{k=1}^{n+1}\sum_{j=1}%
^{n+1}\gamma_{k}\xi_{k,j}\\
&  =\sum_{j=1}^{n+1}\sum_{k=1}^{n+1}\gamma_{k}\xi_{k,j}=\sum_{j=1}%
^{n+1}\lambda_{j}=1\text{,}%
\end{align*}
from which we see that $c\in\mathsf{aff}\left\{  x_{1},\ldots,x_{n+1}\right\}
$. On the other hand, by (\ref{B-1}),
\begin{align*}
\sum_{j=1}^{n+1}\left\vert \gamma_{j}-\lambda_{j}\right\vert  &  =\left\Vert
\left(  \Xi^{T}\right)  ^{-1}\mathbf{\lambda-\lambda}\right\Vert
_{1}=\left\Vert \left(  \left(  \Xi^{T}\right)  ^{-1}-I\right)
\mathbf{\lambda}\right\Vert _{1}\\
&  \leq\left\Vert \left(  \Xi^{T}\right)  ^{-1}-I\right\Vert _{0}\left\Vert
\mathbf{\lambda}\right\Vert _{1}=\left\Vert \left(  \Xi^{T}\right)
^{-1}-I\right\Vert _{0}<\frac{m}{2},
\end{align*}
so for each $j\leq n+1$,%
\[
\gamma_{j}>\lambda_{j}-\frac{m}{2}\geq m-\frac{m}{2}>0\text{.}%
\]
Since also $\sum_{j=1}^{n+1}\gamma_{j}=1$, $\mathbf{\gamma}\in\Delta_{n}^{+}$
and therefore, by Proposition \ref{jul21p3}(iii), $c$ is a relative interior
point $\Sigma^{\prime}$.%
\hfill
$\square$
\end{proof}

\begin{corollary}
\label{sept16c1}Let $\Sigma$ be an $n$-simplex with vertices $a_{1}%
,\ldots,a_{n+1}$ in an $n$-dimensional real Banach space, and $c\ $an interior
point of $\Sigma$. There exists $\delta>0$ such that if $x_{k}\in X$ and
$\left\Vert x_{k}-a_{k}\right\Vert <\delta$ for each $k\leq n+1$, then the
convex hull of the points $x_{1},\ldots,x_{n+1}$ is an $n$-simplex with $c$ in
its interior.
\end{corollary}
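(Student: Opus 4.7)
The plan is to reduce this corollary to Theorem \ref{sept13t1} by exploiting the dimensional hypothesis. The first thing to observe is that, because $a_{1},\ldots,a_{n+1}$ are affinely independent, the $n$ vectors $a_{k}-a_{n+1}$ $(1\leq k\leq n)$ are linearly independent in the $n$-dimensional space $X$ and hence form a basis of $X$. It follows that $\mathsf{aff}\{a_{1},\ldots,a_{n+1}\}=X$. In particular, the hypothesis `$x_{k}\in\mathsf{aff}(\Sigma)$' in Theorem \ref{sept13t1} is automatically satisfied for any $x_{k}\in X$, and by Proposition \ref{sept09p1} the topological interior of $\Sigma$ coincides with $\mathsf{relint}(\Sigma)$, so our $c$ is a relative interior point of $\Sigma$.

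Next I would apply Theorem \ref{sept13t1} to produce the desired $\delta>0$: provided $x_{1},\ldots,x_{n+1}\in X$ and $\left\Vert x_{k}-a_{k}\right\Vert <\delta$ for each $k\leq n+1$, the points $x_{k}$ are affinely independent, and $c$ is a relative interior point of the $n$-simplex $\Sigma^{\prime}\equiv\mathsf{co}\{x_{1},\ldots,x_{n+1}\}$.

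Finally, to upgrade `relative interior' back to `interior', I would repeat the opening observation for the perturbed vertices: since $x_{1},\ldots,x_{n+1}$ are affinely independent in the $n$-dimensional space $X$, we again have $\mathsf{aff}(\Sigma^{\prime})=X$, and Proposition \ref{sept09p1} applied to $\Sigma^{\prime}$ then tells us that $c$ lies in the topological interior of $\Sigma^{\prime}$, as required. The substantive content of the corollary is all in Theorem \ref{sept13t1}; the only `work' to be done here is the two essentially identical dimensional collapses (before and after perturbation), so I do not anticipate any genuine obstacle.
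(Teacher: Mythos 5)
Your proposal is correct and follows exactly the route the paper takes: the paper's proof is the one-line observation that the corollary follows from Theorem \ref{sept13t1} and Proposition \ref{sept09p1}, and your write-up simply makes explicit the two dimensional collapses (that $\mathsf{aff}(\Sigma)=X=\mathsf{aff}(\Sigma^{\prime})$, so relative interior coincides with interior before and after perturbation) that the paper leaves implicit.
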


\begin{proof}
This follows from Theorem \ref{sept13t1} and Proposition \ref{sept09p1}.%
\hfill
$\square$
\end{proof}

\label{e}This corollary is used in the proof of \cite[Proposition 8]{DSBmcom}.%

\bigskip
%

\bigskip

%

\bigskip
%

\bigskip
%

\noindent
\textbf{Author's address: \ }School of Mathematics and Statistics, University
of Canterbury, Christchurch 8140, New Zealand%
\hfill
\textbf{email}: dsbridges.math@gmail.com


\begin{thebibliography}{9}                                                                                                %


\bibitem {Balps}R.A. Alps and D.S. Bridges, \emph{Constructive Morse Set
Theory---a Foundation for Constructive Mathematics}, in preparation.

\bibitem {Bishop}E. Bishop, \emph{Foundations of Constructive Analysis,
}McGraw-Hill, New York, 1967.

\bibitem {BB}E. Bishop and D.S. Bridges, \emph{Constructive Analysis},
Grundlehren der math. Wissenschaften \textbf{279}, Springer Verlag,
Heidelberg-Berlin-New York, 1985.

\bibitem {DSBmcom}D.S. Bridges, \emph{Metric Double Complements of Convex
Sets}, preprint, September 2025.

\bibitem {BMorse}D.S. Bridges, `Morse set theory as a foundation for
constructive mathematics', Theoretical Comp. Sci. 928, 115-135, 2022. https://doi.org/10.1016/j.tcs.2022.06.019

\bibitem {BVtech}D.S. Bridges and L.S. V\^{\i}\c{t}\u{a}, \emph{Techniques of
Constructive Analysis}, Universitext, Springer New York, 2006.

\bibitem {Handbook}D.S. Bridges, H. Ishihara, M.J. Rathjen, H. Schwichtenberg
(editors),\emph{\ Handbook of Constructive Mathematics}, Encyclopedia of
Mathematics and Its Applications \textbf{185}, Cambridge University Press, 2023.

\bibitem {DP}P. Dybjer and E. Palmgren, `Intuitionistic Type Theory', in:
\emph{The Stanford Encyclopedia of Philosophy} (Winter 2024 Edition), Edward
N. Zalta \& Uri Nodelman (eds.), URL https://plato.stanford.edu/archives/win2024/entries/type-theory-intuitionistic.
\end{thebibliography}
\end{document}